\documentclass[11pt]{article}
\usepackage{latexsym}
\usepackage{amsmath,lipsum}
\usepackage{amssymb,amsfonts,amscd}
\usepackage{graphicx}
\usepackage{multirow}
\usepackage{mathabx} 
\usepackage{hyperref}
\usepackage{color}
\usepackage{subcaption}
\usepackage{cite}
\usepackage{algorithm}
\usepackage{algorithmic}
\usepackage[dvipsnames]{xcolor}
\usepackage{changes}
\usepackage{hhline} 
\usepackage{tabularx}
\usepackage{array}
\newcolumntype{P}[1]{>{\centering\arraybackslash}p{#1}}
\usepackage{siunitx}

\textwidth 6.5in
\textheight 8.75in
\oddsidemargin 0in
\topmargin -0.5in

\newtheorem{theorem}{Theorem}[section]

\newtheorem{rem}[theorem]{Remark}

\newtheorem{example}[theorem]{Example}

\newcommand\qed{{\hspace*{\fill}$\Box$\vskip12pt plus 1pt}}

\newenvironment{proof}{{\noindent\bf Proof.\ }}{\qed}
\newenvironment{remark}{\begin{rem}\em}{\end{rem}}

\newcommand\bN{{\mathbb N}}

\newcommand\bR{{\mathbb R}}

\begin{document}
\title{A numerical method for solving elliptic equations \\
on real closed algebraic curves and surfaces}

\author{
Wenrui Hao\thanks{Department of Mathematics, Penn State University, University Park, PA 16802 
(wxh64@psu.edu). } \and
Jonathan D. Hauenstein\thanks{Department of Applied and Computational Mathematics and Statistics,
University of Notre Dame, Notre Dame, IN 46556 (hauenstein@nd.edu, \url{www.nd.edu/\~jhauenst}).
This author was supported in part by 
the National Science Foundation CCF-1812746.
}
\and
Margaret H. Regan\thanks{Department of Mathematics and Computer Science,
College of the Holy Cross, Worcester, MA 01610 (mregan@holycross.edu,
\url{www.margaretregan.com}).}
\and
Tingting Tang\thanks{Department of Mathematics and Statistics,
San Diego State University, San Diego, CA 92182 (ttang2@sdsu.edu, \url{sites.google.com/sdsu.edu/mathtingting-tang/home}).}
}



\maketitle

\begin{abstract}
\noindent 
There are many numerical methods for solving partial different equations (PDEs) on manifolds such as classical implicit, finite difference, finite element, and isogeometric analysis methods which aim at improving the interoperability between finite element method and computer aided design (CAD) software. However, these approaches have difficulty 
when the domain has singularities
since the solution
at the singularity may be multivalued.
This paper develops a novel numerical approach to solve elliptic PDEs on real, closed, connected, orientable, and almost smooth algebraic curves and surfaces.
Our method integrates numerical algebraic geometry, differential geometry, and a finite difference scheme which is demonstrated on
several examples.

\noindent {\bf Keywords}. 
Partial differential equations, elliptic equations, 
numerical algebraic geometry, 
real algebraic geometry

\noindent{\bf AMS Subject Classification.} 65N06, 65H14, 68W30
\end{abstract}


\section{Introduction}\label{Sec:Intro}
Advances in fluid dynamics, biology, material science, and other disciplines have promoted the study of partial differential equations (PDEs) 
defined on various manifolds. Numerous numerical methods have been developed to solve these PDEs, such as classical implicit~\cite{BCOS,BSCO,OS}, finite \mbox{difference \cite{MDS,T1,X1}}, 
finite element~\cite{DE,LC,RWP}, and parameterization methods~\cite{SK,WLGHCT}. 
In this paper, we specifically consider linear elliptic PDEs
defined on closed algebraic curves and surfaces, which are described
implicitly as the solution to a system of polynomial equations.
We consider the well-posedness of the problem when the domain has singularities corresponding
to problems in which variational methods can not be applied. 
In particular, when the domain is a real closed algebraic curve, we 
can always reduce the problem to solving an ordinary differential equation~(ODE) described in terms of the arc length.  Numerically,
we can construct a meshing of the curve which is uniform in arc length
via numerical algebraic geometry \cite{BHSW:Bertini,BertiniReal}. 
Such an approach is not limited to smooth curves nor when an {\em a priori}
global parameterization of the curve is known.
From the meshing, we introduce a local tangential parameterization 
and embed it in a finite difference scheme to numerically solve the problem. A similar approach is extended to real closed algebraic surfaces
which are almost smooth, i.e., have at most finitely many singularities. 

The linear elliptic PDEs under consideration have
the form
\begin{equation}\label{Eq: ellip_pde}
	-\Delta u + c\cdot u = f \qquad \hbox{~on~} \Omega 
\end{equation} 
where $\Omega$ is a closed, connected, and orientable $d$-dimensional 
algebraic set in $\mathbb{R}^n$ where $0<d<n$. 
Thus, $\Omega$ is described by the solution set of a system
of polynomial equations $F = 0$ on $\bR^n$.
Curves have $d = 1$ while surfaces have $d = 2$.
For example, the unit circle in $\bR^2$ 
as shown in Fig.~\ref{Fig:Lemniscate}(a)
is a curve defined by the 
solution set of the polynomial equation $x^2 + y^2 - 1 = 0$
while the unit sphere in $\bR^3$ is a surface defined by
the solution set of the polynomial equation $x^2 + y^2 + z^2 -1 = 0$.
The operator $\Delta$ is the Laplace-Beltrami operator on $\Omega$
while $c$ and $f$ are functions independent of $u$.
With this setup, the dimension of the tangent space at each point in $\Omega$
is at least $d$.  The smooth points of $\Omega$
are the points where the dimension of the tangent space is equal to $d$
while the singular points are those where the dimension of the tangent space is larger than $d$.  For curves ($d = 1$),
the number of singular points is always finite, e.g., the lemniscate of Gerono
showed in Fig.~\ref{Fig:Lemniscate}(b)
has one singular point. 
We only consider surfaces ($d = 2$)
where the number of singular points is finite, called {\em almost smooth} surfaces. 
The horn torus shown in Fig.~\ref{Fig:Lemniscate}(c)
is an almost smooth surface with one
singular point
while the Whitney umbrella
shown in Fig.~\ref{Fig:Lemniscate}(d)
is not an almost smooth surface
since it has a line of singularities.

\begin{figure}[!b]
    \centering
    $\begin{array}{ccccccc}
    \includegraphics[scale=0.04]{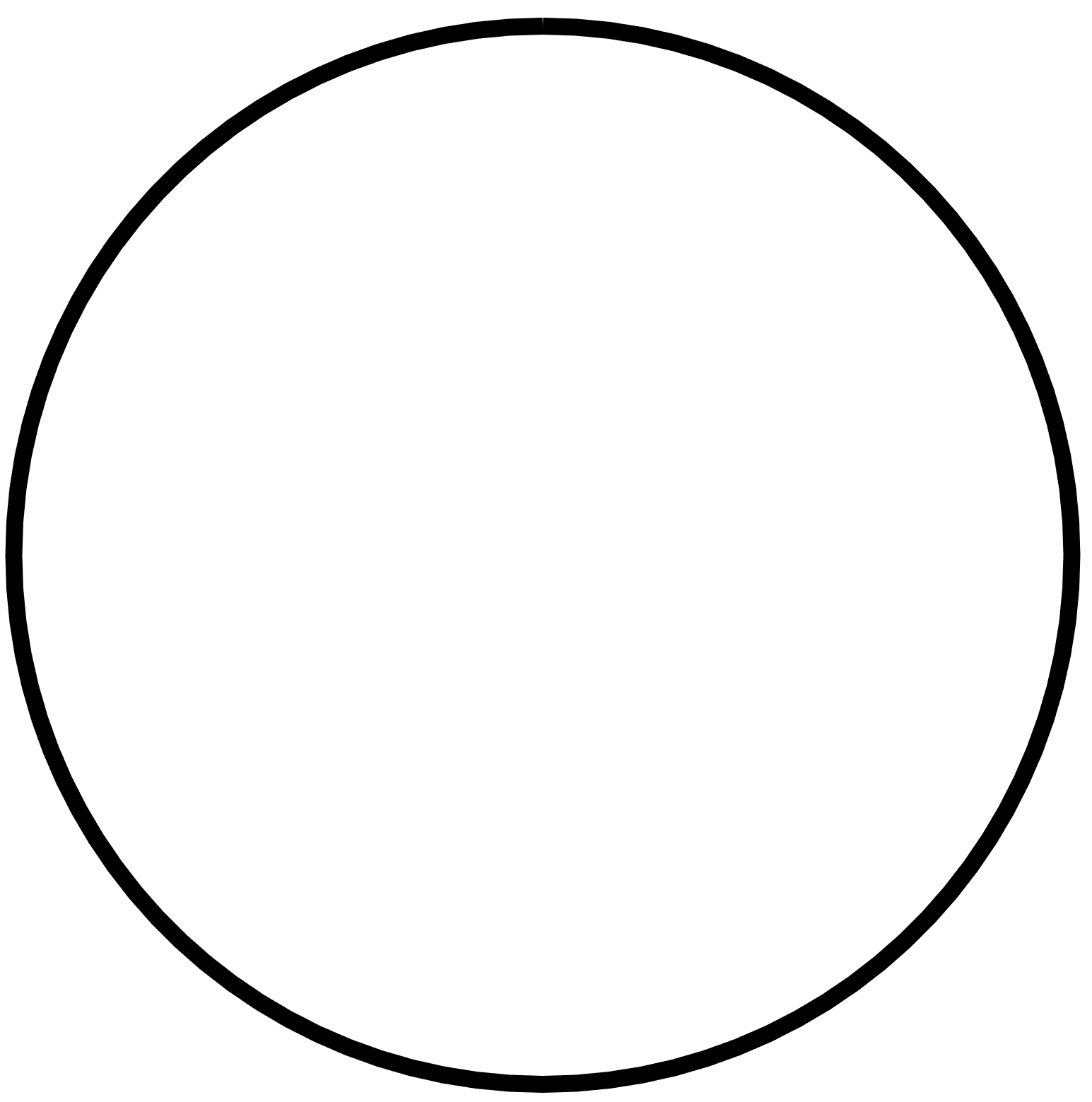} & &
    \includegraphics[scale=0.05]{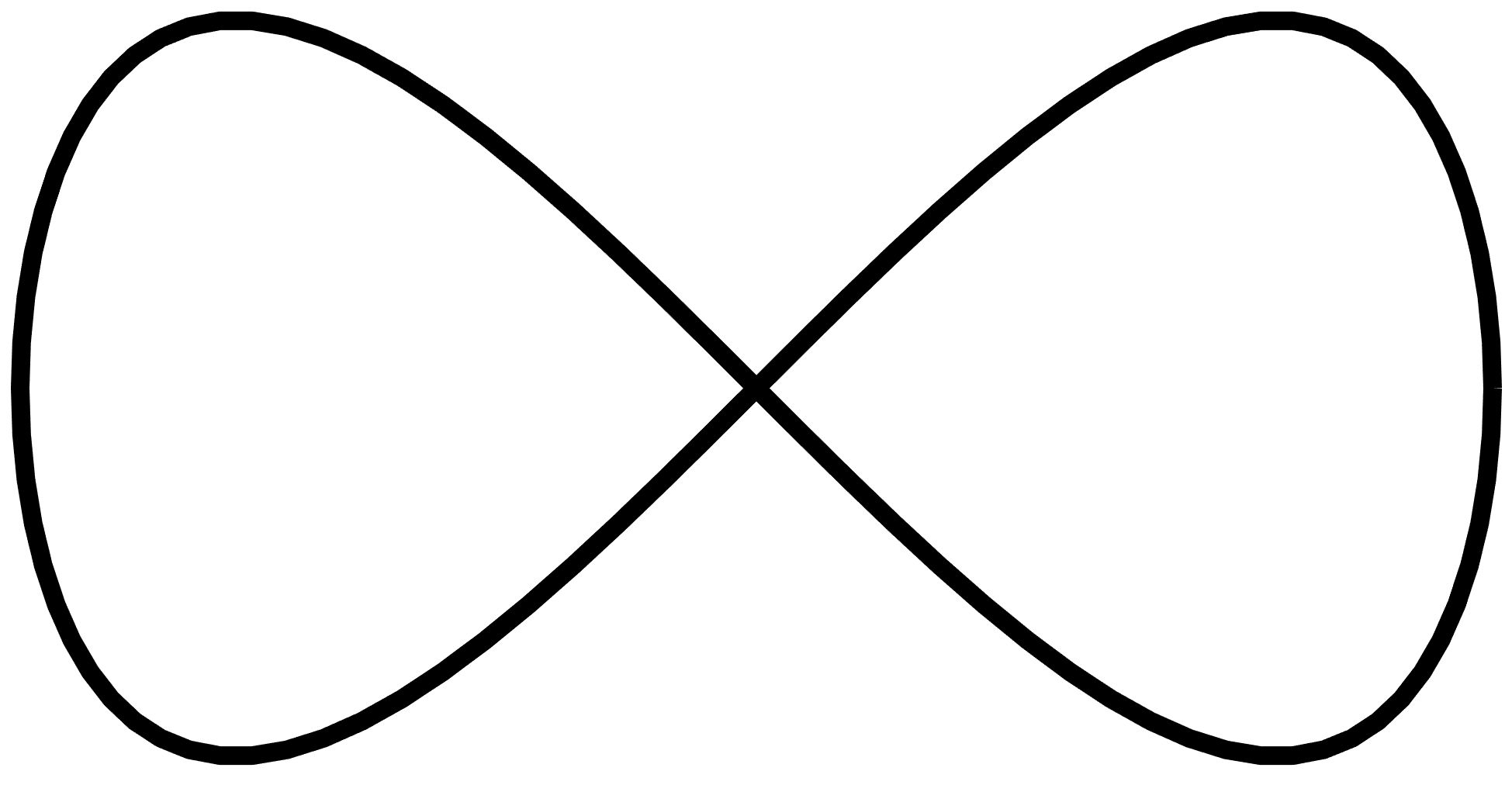} & &
    \includegraphics[scale=0.05]{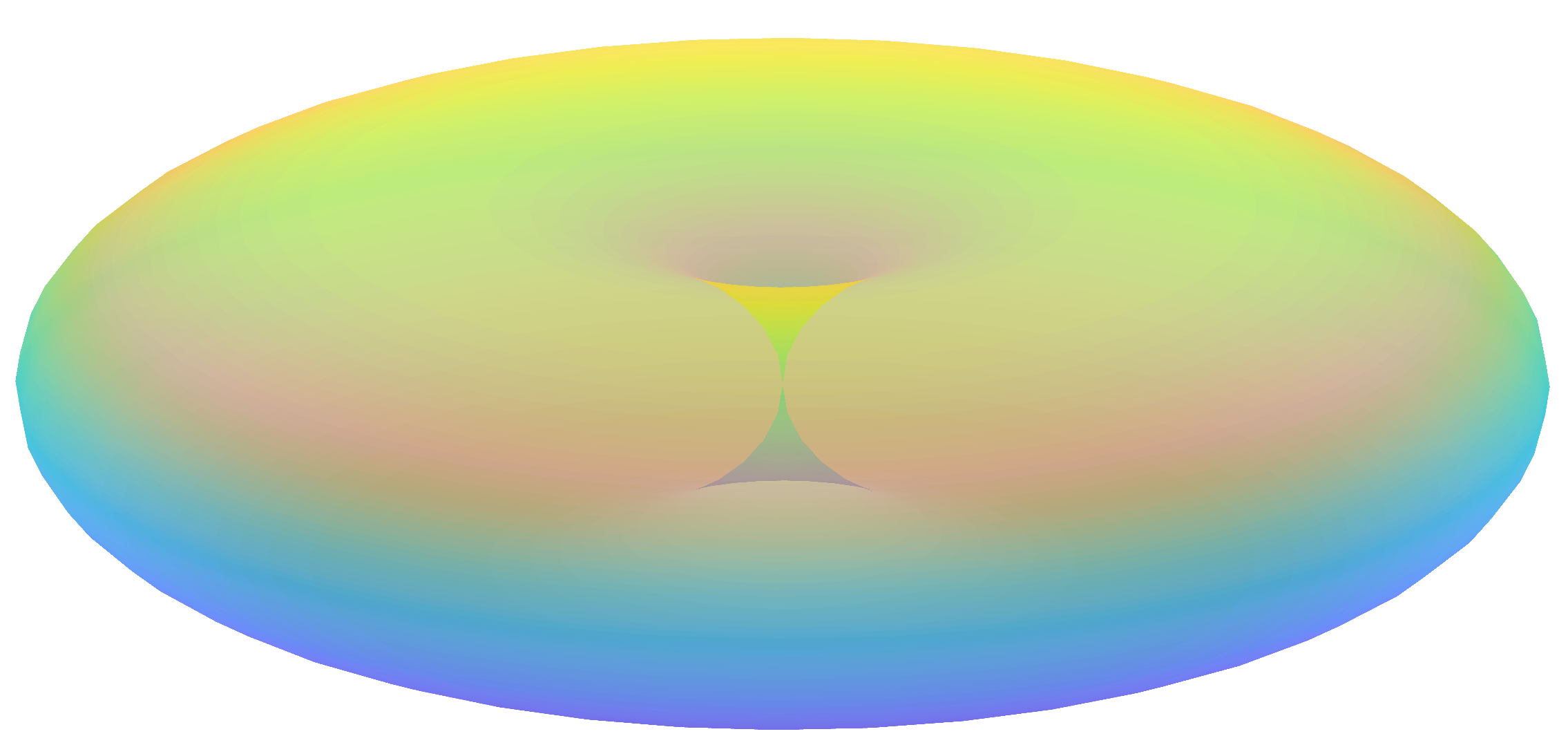} & &
    \includegraphics[scale=0.05]{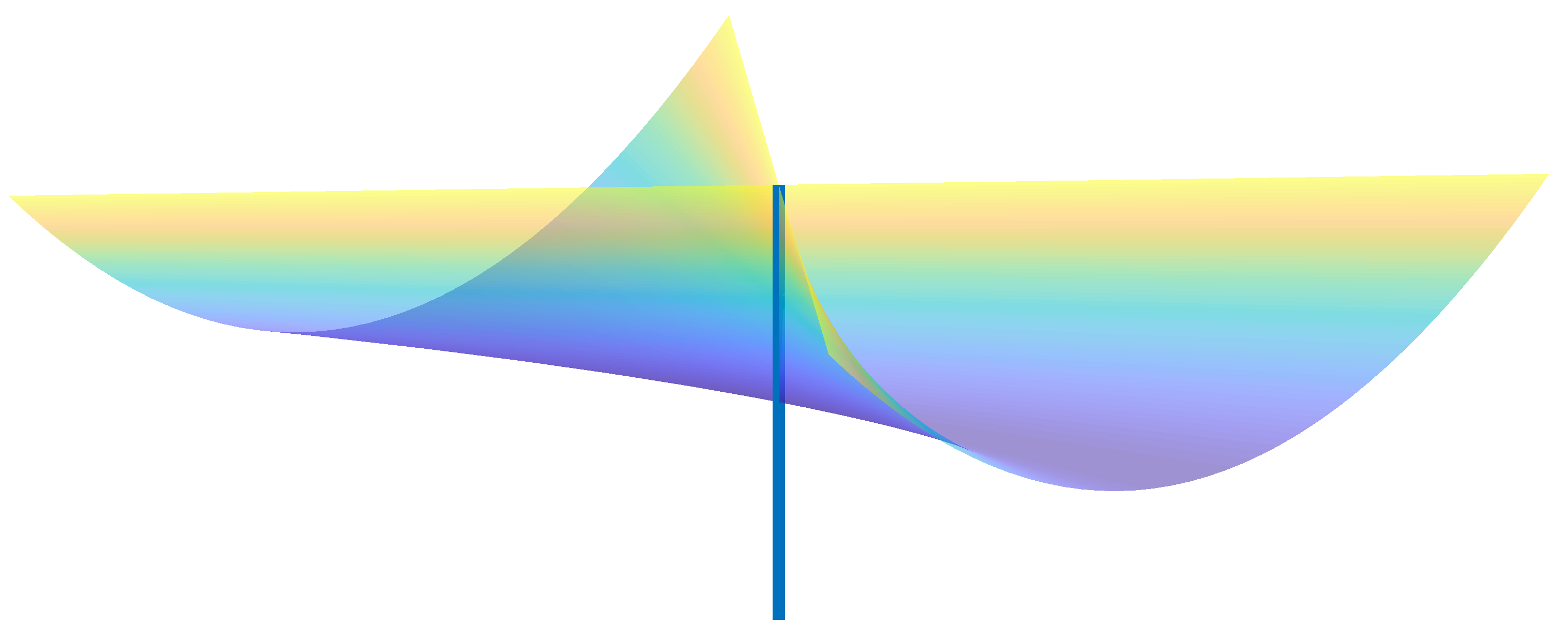} \\
    \hbox{(a)} & & \hbox{(b)} & & \hbox{(c)} & & \hbox{(d)} \\
    \end{array}$
    \caption{(a) circle, (b) lemniscate of Gerono, (c) horn torus, and (d) Whitney umbrella
    }
    \label{Fig:Lemniscate}
\end{figure}

For any $d$, if there are no singular points, then $\Omega$
is said to be smooth, i.e., a manifold, and there are many existing numerical methods, 
e.g., \cite{BHLM,BCOS,BSCO,LAGR,OS,MDS,T1,X1,DE,LC,RWP,SK,WLGHCT},
for solving \eqref{Eq: ellip_pde}.
For example, \cite{DE} considered finite element methods for solving
on triangulated surfaces and implicit surface methods using a level set description of the surface. 
Variational techniques for solving on smooth surfaces 
based on splines and non-uniform B-splines (NURBS) are reviewed in \cite{LAGR}. Recently,  \cite{BHLM} established the theoretical framework to analyze cut finite element methods for the Laplace-Beltrami operator defined on a manifold.
These methods focus on smooth surfaces which either can be parameterized or implicitly represented by level sets. In the case of the implicit surface methods, a discretization of the space where the manifold is embedded in is required, which can be inefficient when the codimension, i.e., $n - d$, is high.  

To the best of our knowledge,
little to no studies have been done to investigate the existence of a theoretical or numerical solution on curves with singularities. 
One possible reason for this is that the solution $u$ to \eqref{Eq: ellip_pde} need not take a
single value at a singularity
of $\Omega$ due to the presence
of multiple local irreducible
components at a singularity,
e.g., the lemniscate of Gerono
shown in Fig.~\ref{Fig:Lemniscate}(b)
has two local irreducible components
at the singular point.
As an illustration, 
Figure~\ref{fig:IntroPlot}
shows the solutions to 
the following two problems
\begin{equation}\label{eq:IntroEx}
\begin{array}{cc}
(a)
-\Delta u + \left(
\pi - \frac{4x_1^2 + 4x_2^2 - 3}{8x_1^2x_2^2 + 16x_2^4 - 3x_1^2 - 17x_2^2 + 4}\right)
\cdot u = \pi\cdot x_1
\hbox{~on~} \Omega
 & 
 (b)
 -\Delta u + u = x_1^2 + x_1x_2 - 1
\hbox{~on~} \Omega
 \end{array}
\end{equation}
where the domain 
is the lemniscate of Gerono 
shown in Fig.~\ref{Fig:Lemniscate}(b) and defined by
$$\Omega = \{(x_1,x_2)\in\bR^2~|~
x_1^4 - x_1^2 + x_2^2 = 0\}.
$$
The solution of the former
is $u = x_1$ 
which is univalued at the singularity $(0,0)$
while the solution of
the latter takes two
different values at $(0,0)$,
one along each of the two
local irreducible
components at $(0,0)$.
These problems will be further
considered in
Exs.~\ref{ex:LemniscateSolve}~and~\ref{ex:LemniscateSolveNum}, respectively.
Numerical algebraic geometry
will also be used to compute
the local irreducible components
\cite{LocalNID}
to ensure the proper structure
of the solution $u$ at the singularities.

\begin{figure}
\begin{center}
\begin{tabular}{ccc}
\includegraphics[scale = 0.06]{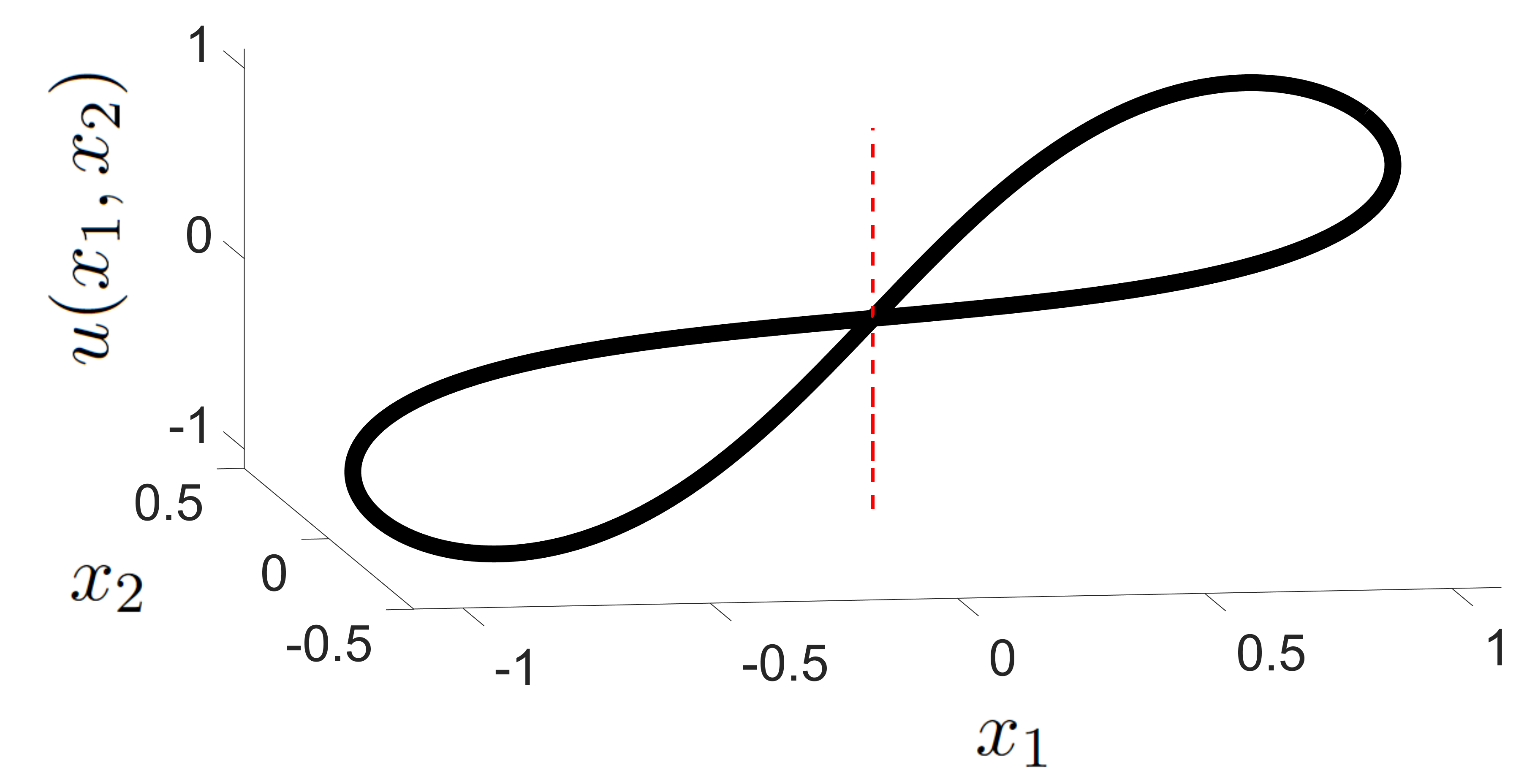}
&~~~~& \includegraphics[scale = 0.06]{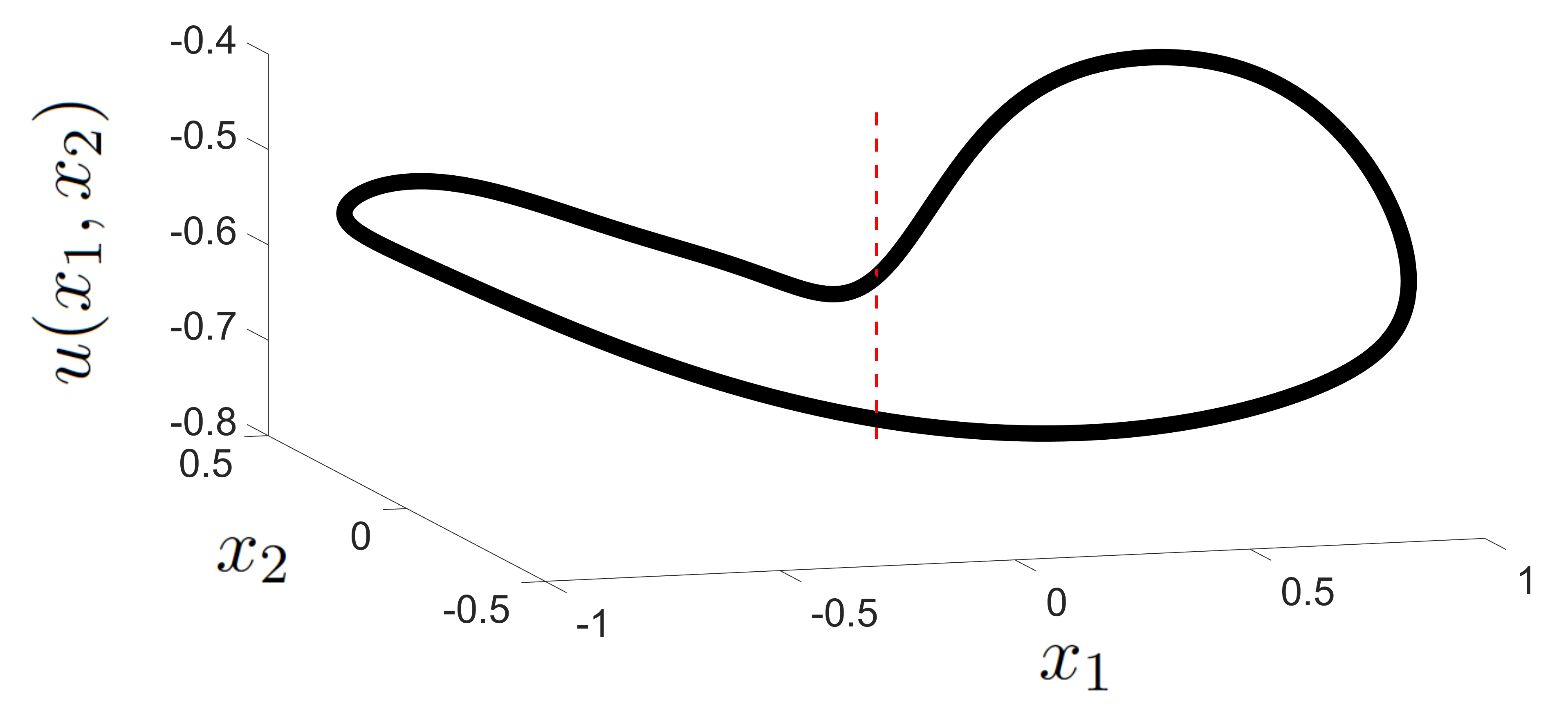}\\
(a) & & (b)
\end{tabular}
\end{center}
\caption{Solutions corresponding
to \eqref{eq:IntroEx}
on the lemniscate of Gerono
where the dashed line 
corresponds with $(x_1,x_2)=(0,0)$
showing the first is univalued
while the second is multivalued.
}
\label{fig:IntroPlot}
\end{figure}

The structure of the rest of the 
paper is as follows. Section~\ref{Sec:weak_solution} shows the existence and uniqueness of the solution to the elliptic problem \eqref{Eq: ellip_pde} under certain conditions
along with analysis when a global parameterization
is known.  Sections~\ref{Sec:1d} 
and~\ref{Sec:2d} describe a local tangential parameterization
at smooth points along with considering local irreducible components
at singularities.

\section{Global parameterization}\label{Sec:weak_solution}


\subsection{Formulation}\label{sec:formulation}

For $k\in\bN\cup\{\infty\}$ and a connected set $D\subset\bR$,
let $C^k(D,\bR^n)$ consist of the functions \mbox{$\alpha:D\rightarrow\bR^n$} which are $k$-times continuously differentiable on $D$.
For $0\leq r\leq k$, let $\alpha^{(r)}(t)$ denote the $r^{\rm th}$
derivative of $\alpha$ at $t$.  A real algebraic curve $\Omega\subset\bR^n$ is called a {\em closed parametric $C^k$ curve} if 
there exists a closed interval $[a,b]\in\bR$ and a surjective 
map $X:[a,b]\rightarrow\Omega$
such that \hbox{$X\in C^k([a,b],\bR^n)$} with $X^{(r)}(a) = X^{(r)}(b)$
for all $0 \leq r \leq k$.  If $X$ is also a bijection between $[a,b)$
and~$\Omega$, then~$\Omega$ is {\em simple}.  
A function $h:\Omega\rightarrow\bR$ is {\em $k$-times continuously differentiable} 
on $\Omega$ if~\mbox{$h\circ X\in C^k([a,b],\bR)$}.

\begin{example}\label{ex:IllustrativeSimpleCurves}
The unit circle $\Omega=\{x_1^2+x_2^2=1\}\subset\bR^2$
shown in Fig.~\ref{Fig:Lemniscate}(a)
is a 
simple closed parametric $C^{\infty}$ curve.  
The surjective function $X:[0,2\pi]\mapsto \Omega$
defined by $X(\theta) = (\cos(\theta),\sin(\theta))$ 
is infinitely differentiable and bijects $[0,2\pi)$ onto $\Omega$.

The lemniscate of Gerono
$\Lambda = \{x_1^4-x_1^2+x_2^2=0\}\subset\bR^2$
shown in Fig.~\ref{Fig:Lemniscate}(b) is a closed parametric~$C^{\infty}$ curve since the surjection $Y:[0,2\pi]\mapsto\Lambda$ defined
by
$Y(\theta) = (\cos(\theta), \sin(2\theta)/2)$
is infinitely differentiable.  
The map $Y$ is not a bijection 
since $Y(\pi/2) = Y(3\pi/2) = (0,0)$
which is the self-intersection point.
Hence, $\Lambda$ is not a simple curve.
\end{example}

A real algebraic surface $\Omega\subset\bR^n$
is called a {\em closed parametric $C^k$ surface}
if, for every $x^*\in\Omega$, there exists a nonempty open connected set
$V\subset\bR^2$, an open set $U\subset\bR^n$ containing~$x^*$,
and a bijective map $X:V\rightarrow U\cap\Omega$ such that $X\in C^k(V,\bR^n)$
and the rank of the Jacobian matrix of $X$, denoted $JX$, at every point in $V$ is $2$.  
A function $h:\Omega\rightarrow\mathbb{R}$ is {\em $k$-times 
continuously differentiable} on $\Omega$ if $h\circ X\in C^k(V,\bR)$.

\begin{example}\label{ex:IllustrativeSimpleSurfaces}
The unit sphere $\Omega = \{x_1^2+x_2^2+x_3^2=1\}\subset\bR^3$ is a closed parameteric
$C^\infty$ surface.  
Due to rotational symmetry of the sphere,
we only need to consider one point,
say $x^*=(0,0,-1)$. 
As shown in Fig.~\ref{Fig:SurfacePlots},
one can take
$V = \{a_1^2+a_2^2<1/4\}\subset\bR^2$,
$U = \{x_1^2+x_2^2<1/4\}\subset\bR^3$
which clearly contains~$x^*$,
and bijective map $X:V\rightarrow U\cap \Omega$
defined by
$$X(a_1,a_2) = \left(a_1,a_2,-\sqrt{1-a_1^2-a_2^2}\right)$$
which is infinitely differentiable 
with full rank Jacobian matrix on $V$.

The Whitney umbrella $\Lambda = \{x_1^2 = x_2^2 x_3\}\subset\bR^3$ 
shown in Fig.~\ref{Fig:Lemniscate}(c)
is not 
a closed parametric~$C^k$ surface for any $k\in\bN\cup\{\infty\}$
since, for example, the surface $\Lambda$ near the point $(0,0,-1)$ is one-dimensional (called the ``handle'' of the Whitney umbrella).

\begin{figure}[!t]
    \centering
    $\begin{array}[t]{ccc}
    & \multirow{2}{*}{} & \\
    \includegraphics[scale=0.02]{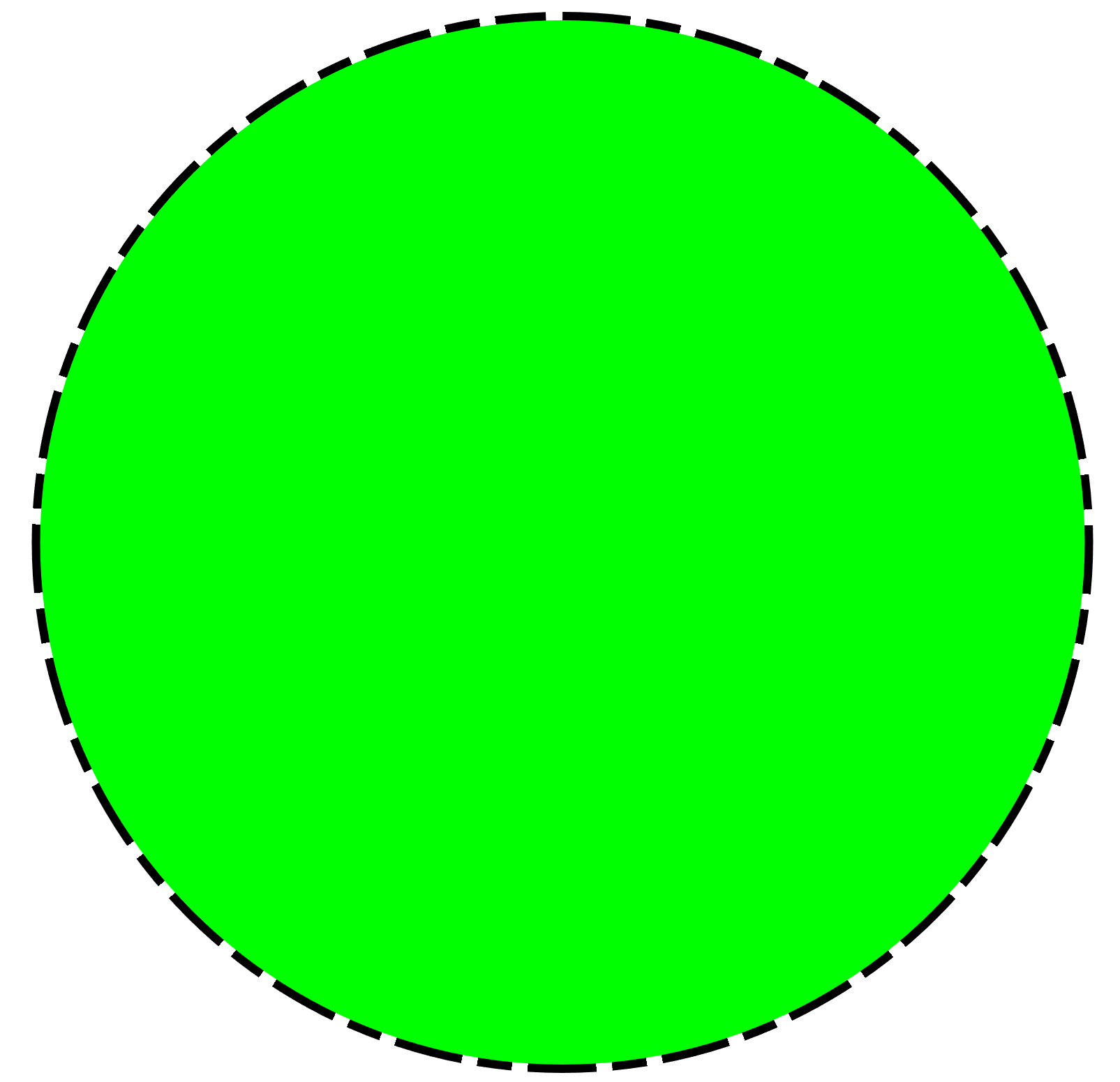} & \overset{X}{\longrightarrow} &
    \includegraphics[scale=0.1]{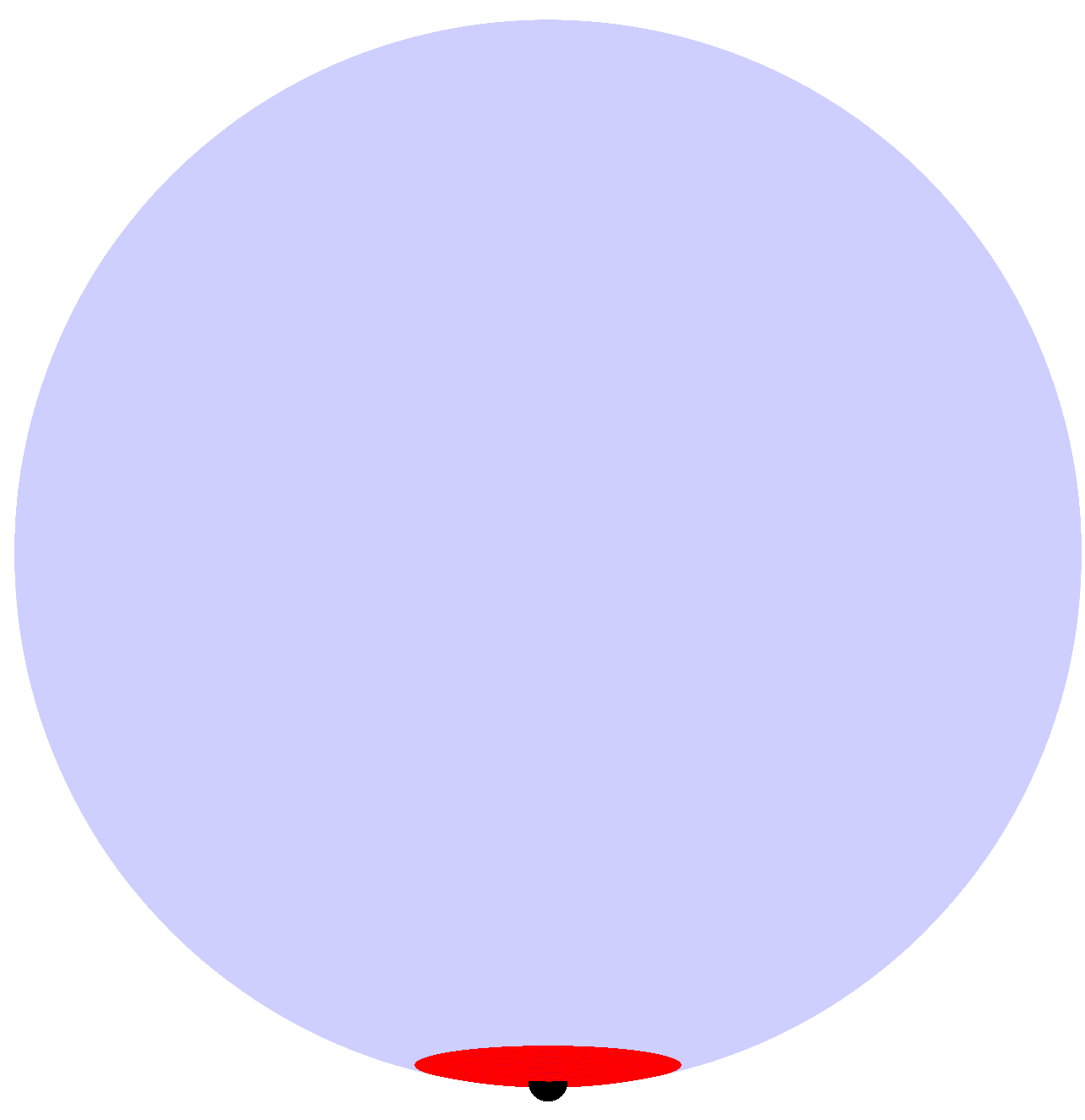}\\
    V & & x^*\in U\cap\Omega 
    \end{array}$
    \caption{Illustrating a closed parametric map at $x^*=(0,0,-1)$
    on the sphere from Ex.~\ref{ex:IllustrativeSimpleSurfaces}}
    \label{Fig:SurfacePlots}
\end{figure}
\end{example}

We now turn to consider \eqref{Eq: ellip_pde} on $\Omega\subset\bR^n$.
Suppose that $G$ is a given metric tensor defined on
the smooth points of~$\Omega$ with inverse $G^{-1}$.
Then, in local coordinates $(t_1,\dots,t_d)$ where $d = \dim \Omega$,
\begin{equation}\label{Eq: LB_local_coord}
\Delta u = \frac{1}{\sqrt{|g|}}\sum_{i=1}^d \frac{\partial}{\partial t_i}\left(\sqrt{|g|}\cdot\sum_{j=1}^d g^{ij} \frac{\partial u}{\partial t_j}\right)
\end{equation}
where $g=\det G$ and $g^{ij}$ is the $(i,j)$ entry of $G^{-1}$.

\begin{example}
For $\Omega = \bR^n$ with
the standard metric tensor $G = I_{n}$, the $n\times n$ identity matrix, 
the local coordinates are simply the standard
coordinates $(x_1,\dots,x_n)$, 
$g = \det G = 1$, and $g^{ij} = \delta_{ij}$ (Kronecker delta).  Hence, 
$$\Delta u = \sum_{i=1}^n \frac{\partial^2 u}{\partial x_i^2}$$ 
which is simply the Laplacian~of $u$~on~$\bR^n$.
\end{example}

\begin{example}
Reconsider the unit circle $\Omega=\{x_1^2+x_2^2=1\}\subset\bR^2$ 
with parameterization
$$X(\theta)=(x_1(\theta),x_2(\theta))=(\cos(\theta),\sin(\theta))
\hbox{~~~~~for~~}\theta\in[0,2\pi]$$
from Ex.~\ref{ex:IllustrativeSimpleCurves}.
Since
$$g=\|X'(\theta))\|^2=
\sin^2(\theta) + \cos^2(\theta) = 1,$$ 
we know that $G = G^{-1} = [1]$.  Hence,
$$\Delta u = \frac{d^2u}{d\theta^2}.$$
For example, if $u(x) = x_1+x_2$, then $u(\theta) = \cos(\theta)+\sin(\theta)$
with
$$\Delta u = \frac{d^2}{d\theta^2}(\cos(\theta)+\sin(\theta)) = -(\cos(\theta)+\sin(\theta)) = -u.$$

If, instead, we utilize the rational parameterization 
$$X(t)=(x_1(t),x_2(t))=\left(\frac{1-t^2}{1+t^2},\frac{2t}{1+t^2}\right) \hbox{~~~~~for~~}t\in\bR,$$
then 
$$g=\|X'(t)\|^2=\left(\frac{-4t}{(1+t^2)^2}\right)^2 + \left(\frac{2(1-t^2)}{(1+t^2)^2}\right)^2 = \frac{4}{(1+t^2)^2}$$
with $G = [g]$ and $G^{-1} = [g^{-1}]$.
Hence,
$$\Delta u = \frac{1+t^2}{2} \frac{d}{dt}\left(\frac{1+t^2}{2} \frac{du}{dt} \right) = \frac{1+t^2}{4}\left((1+t^2)\frac{d^2u}{dt^2}+2t\frac{du}{dt}\right)
= \frac{(1+t^2)^2}{4}\frac{d^2u}{dt^2}+
\frac{t(1+t^2)}{2}\frac{du}{dt}.$$
Similar as above, if $u(x) = x_1+x_2$, then $u(t) = (1+2t-t^2)/(1+t^2)$ and one can verify that
$$\Delta u = -\frac{1+2t-t^2}{1+t^2} = -u.$$
\end{example}

\begin{example}
For the unit sphere $\Omega=\{x_1^2+x_2^2+x_3^2=1\}\subset\bR^3$, consider the parameterization
$$X(\theta_1,\theta_2)=(\sin(\theta_1)\cos(\theta_2),~\sin(\theta_1)\sin(\theta_2),~\cos(\theta_1))
\hbox{~~~~~for~~}\theta_1\in[0,\pi] \hbox{~~and~~} \theta_2\in[0,2\pi].$$  The metric tensor is $$G=\left[\frac{\partial x}{\partial\theta_{i}}\cdot\frac{\partial x}{\partial\theta_{j}}\right]_{i,j}=\left[\begin{array}{cc}
1 & 0 \\0 & \sin^2(\theta_1)
\end{array}\right]
\hbox{~~~with~~~}
G^{-1} = \left[\begin{array}{cc}
1 & 0 \\0 & \csc^2(\theta_1)
\end{array}\right]$$
yielding $g=\det G = \sin^2(\theta_1)$.  Note that since $\theta_1\in[0,\pi]$, 
$\sqrt{|g|}=\sin(\theta_1)\geq0$. 
Therefore, 
$$\begin{array}{rcl}
\Delta u &=&
\displaystyle
\frac{1}{\sin(\theta_1)}\left(\frac{\partial}{\partial \theta_1}
\left(\sin(\theta_1)\frac{\partial u}{\partial \theta_1}\right)
+ \frac{\partial}{\partial \theta_2}\left(\sin(\theta_1) \csc^2(\theta_1) \frac{\partial u}{\partial \theta_2}\right)\right) \\[0.15in]
&=& \displaystyle
\frac{\partial^2 u}{\partial \theta_1^2} + \csc^2(\theta_1)\frac{\partial^2 u}{\partial \theta_2^2} + \cot(\theta_1)\frac{\partial u}{\partial \theta_1}.
\end{array}
$$
For example, if $u(x) = x_1+x_2+x_3$, then $u(\theta) = 
\sin(\theta_1)(\sin(\theta_2)+\cos(\theta_2))+\cos(\theta_1)$
with
$$\Delta u = -u-\frac{\sin(\theta_2)+\cos(\theta_2)}{\sin(\theta_1)}
+ \left(\cos^2(\theta_1)\frac{\sin(\theta_2)+\cos(\theta_2)}{\sin{\theta_1}} - \cos(\theta_1)\right) = -2u.$$
\end{example}

\subsection{Well-posedness for curves}\label{sec:wellposedness}

Let $H^1(\Omega)$ denote the Sobolev space with $k=p=1$ and vanishing boundary set $\Omega$, and $H^{-1}(\Omega)$ denote the dual space to $H^1(\Omega)$.
When $\Omega$ is understood, we simply write $H^1$ and $H^{-1}$, respectively. 

The following provides our main theoretical result about
well-posedness of \eqref{Eq: ellip_pde} for curves.

\begin{theorem}\label{thm:Curves}
If $\Omega$ is a closed parametric $C^1$ curve and $f,c\in H^{-1}$ with $c\geq0$ and $\int_{\Omega}c>0$,
then there exists a unique weak 
solution $u\in H^1$ to \eqref{Eq: ellip_pde}. 
\end{theorem}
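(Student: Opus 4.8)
The plan is to reduce the problem to a periodic two-point problem in a single variable and then invoke the Lax--Milgram theorem. Since $\Omega$ is a closed parametric $C^1$ curve, reparameterizing by arc length produces a surjection from the circle of circumference $L$ (the total arc length of $\Omega$) onto $\Omega$ under which the induced metric is identically $1$; hence by \eqref{Eq: LB_local_coord} the Laplace--Beltrami operator is simply $\Delta u = u''$ in the arc-length coordinate $s$, the volume element is $ds$, and $H^1 = H^1(\Omega)$ is identified with the periodic Sobolev space of $L$-periodic functions (when $\Omega$ is not simple this identification is exactly what allows $u$ to take different values at the distinct preimages of a self-intersection, matching the discussion following \eqref{eq:IntroEx}). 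In this coordinate the weak form of \eqref{Eq: ellip_pde} reads: find $u\in H^1$ with $B(u,v)=\ell(v)$ for all $v\in H^1$, where
\[
B(u,v) = \int_0^L u'\,v'\,ds + \langle c, uv\rangle, \qquad \ell(v) = \langle f, v\rangle,
\]
the periodic boundary terms from integration by parts cancelling because $u'(0)=u'(L)$, and $\langle\cdot,\cdot\rangle$ denotes the $H^{-1}$--$H^1$ pairing. A feature special to curves that I would exploit throughout is that in one dimension $H^1\hookrightarrow C^0$, so every $u\in H^1$ is bounded and continuous and $H^1$ is a Banach algebra, $\|uv\|_{H^1}\le C\|u\|_{H^1}\|v\|_{H^1}$; in particular $uv\in H^1$, so the pairing $\langle c, uv\rangle$ is well defined even though $c$ is only a distribution.

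With this setup the Lax--Milgram hypotheses split into three checks. Boundedness of $\ell$ is immediate from $f\in H^{-1}$, giving $|\ell(v)|\le\|f\|_{H^{-1}}\|v\|_{H^1}$. Boundedness of $B$ follows from Cauchy--Schwarz on the first term, $|\int_0^L u'v'\,ds|\le\|u\|_{H^1}\|v\|_{H^1}$, together with the Banach algebra estimate on the second, $|\langle c, uv\rangle|\le\|c\|_{H^{-1}}\|uv\|_{H^1}\le C\|c\|_{H^{-1}}\|u\|_{H^1}\|v\|_{H^1}$. The only substantive point is coercivity of $B$.

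Coercivity is the heart of the argument and the place where both $c\ge0$ and $\int_\Omega c>0$ are used. Since $c\ge0$ we have $B(u,u)=\int_0^L|u'|^2\,ds+\langle c,u^2\rangle\ge\int_0^L|u'|^2\,ds$, which controls only the seminorm; as the seminorm vanishes on constants, the hypothesis $\int_\Omega c>0$ is exactly what is needed to control the constant mode. I would argue by contradiction: if $B$ were not coercive there would exist $u_n\in H^1$ with $\|u_n\|_{H^1}=1$ and $B(u_n,u_n)\to0$, whence both nonnegative summands vanish, $\int_0^L|u_n'|^2\,ds\to0$ and $\langle c,u_n^2\rangle\to0$. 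By the compact Sobolev embedding of $H^1$ into $L^2$ on the circle, a subsequence satisfies $u_n\to u$ in $L^2$; combined with $u_n'\to0$ in $L^2$ this forces $u'=0$ and upgrades the convergence to strong $H^1$ convergence, while $\|u\|_{L^2}^2=\lim\|u_n\|_{L^2}^2=\lim\bigl(1-\|u_n'\|_{L^2}^2\bigr)=1$. Thus $u\equiv k$ for a constant $k\neq0$. Strong $H^1$ convergence with the Banach algebra property gives $u_n^2\to u^2$ in $H^1$, so $\langle c,u^2\rangle=\lim\langle c,u_n^2\rangle=0$; but $u\equiv k\neq0$ yields $\langle c,u^2\rangle=k^2\int_\Omega c>0$, a contradiction. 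Hence $B(u,u)\ge\alpha\|u\|_{H^1}^2$ for some $\alpha>0$.

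With $B$ bounded and coercive and $\ell$ bounded, Lax--Milgram delivers a unique $u\in H^1$ solving $B(u,v)=\ell(v)$ for all $v\in H^1$, i.e. a unique weak solution of \eqref{Eq: ellip_pde}. I expect the coercivity step to be the main obstacle: it is the only place the sign and integral conditions on $c$ enter, and it requires care in making sense of and proving continuity of $\langle c,u^2\rangle$ when $c$ is merely a distribution. The one-dimensional Banach algebra structure of $H^1$, a consequence of the embedding $H^1\hookrightarrow C^0$ special to curves, is precisely what makes that term tractable and is the reason such a clean statement holds for $d=1$.
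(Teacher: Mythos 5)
Your proposal is correct, and it follows the paper's high-level strategy (weak formulation plus Lax--Milgram), but it resolves the crucial step---coercivity---by a genuinely different route. The paper introduces the energy inner product $\langle u,v\rangle_\Omega=\langle\nabla u,\nabla v\rangle_{L^2}+\langle\sqrt{c}\,u,\sqrt{c}\,v\rangle_{L^2}$ and observes that $a(v,v)=\|v\|_\Omega^2$, so coercivity and boundedness of $a(\cdot,\cdot)$ in \eqref{Eq: Bilinear a} are immediate \emph{with respect to the energy norm}, and Lax--Milgram is invoked on $(H^1,\|\cdot\|_\Omega)$. That shortcut is elegant but leaves implicit the equivalence of $\|\cdot\|_\Omega$ with the standard $H^1$ norm: without it, $(H^1,\|\cdot\|_\Omega)$ need not be complete, and $l(v)=\int_\Omega fv$, which $f\in H^{-1}$ bounds only in the $H^1$ norm, need not be bounded in $\|\cdot\|_\Omega$. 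You instead prove coercivity with respect to the standard $H^1$ norm directly, by contradiction: a normalized sequence with $B(u_n,u_n)\to 0$ converges (via Rellich compactness on the circle and $u_n'\to 0$) to a nonzero constant $k$, and then $\int_\Omega c>0$ forces $\langle c,u^2\rangle=k^2\int_\Omega c>0$, contradicting $\langle c,u_n^2\rangle\to 0$. This Poincar\'e-type inequality is precisely the norm equivalence the paper's argument needs but never states, so your proof is more self-contained, at the price of a compactness argument. You are also more careful with the hypothesis $c\in H^{-1}$: the paper's expression $\sqrt{c}\cdot u$ tacitly treats $c$ as a nonnegative function, whereas your use of the one-dimensional Banach algebra property of $H^1$ (via $H^1\hookrightarrow C^0$, so $uv\in H^1$ and $\langle c,uv\rangle$ is well defined) genuinely accommodates distributional $c\geq 0$. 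Your opening reduction to arc length is a cosmetic difference from the paper's abstract use of $\nabla$ on $\Omega$; note only that it presumes a parameterization with nonvanishing derivative, a regularity point that both you and the paper pass over for merely $C^1$ parametric curves.
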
 

\begin{proof}
We first define a weak solution to~\eqref{Eq: ellip_pde} by multiplying $v\in H^1$ to both sides of~\eqref{Eq: ellip_pde} and applying Green's first identity.  Hence, for the standard inner product $\langle\cdot,\cdot\rangle$, we have
\begin{equation}\label{Eq: weakform}
	\int_{\Omega}(-\Delta u+cu)vdx= \int_{\Omega}\langle\nabla u,\nabla v\rangle dx +\int_{\Omega} cuv dx=\int_\Omega fvdx.
\end{equation}
In particular, a function $u\in H^1$ is called a weak solution to 
\eqref{Eq: ellip_pde} if \eqref{Eq: weakform} is satisfied for all $v\in H^1$.  Consider writing \eqref{Eq: weakform} in the following bilinear
form:
\begin{equation}\label{Eq: weakfunctional}
	a(u,v)=l(v)
\end{equation}
where
\begin{equation}\label{Eq: Bilinear a}
 a(u,v):=\int_{\Omega}\langle\nabla u,\nabla v\rangle dx+\int_\Omega cuvdx
 \hbox{~~~~~and~~~~~} l(v):=\int_\Omega fvdx.\end{equation}
 Then, we can prove \eqref{Eq: ellip_pde} has a unique weak solution in $H^1$ using Lax-Milgram Theorem.

Define
$$\langle \alpha,\beta\rangle_{L^2} := \int_\Omega \langle \alpha,\beta\rangle dx
\hbox{~~~~and~~~~}
\langle u,v\rangle_{\Omega}:=\langle\nabla u,\nabla v\rangle_{L^2}+\langle \sqrt{c}\cdot u,\sqrt{c}\cdot v\rangle_{L^2}.$$ 
The assumptions on $c$ imply that $\langle\cdot,\cdot\rangle_\Omega$ is an
inner product.  In fact, when $c\equiv 1$, $\langle\cdot,\cdot\rangle_\Omega$ 
is the default inner product on $H^1$. 
Let $\|\cdot\|_{\Omega}$ on $H^1$ be the norm induced by  $\langle\cdot,\cdot\rangle_\Omega$. Next, we show the coercivity of
the bilinear function $a(\cdot,\cdot)$. 
To that end, for any $v\in H^1$,
	\begin{equation*}
		a(v,v)=\|\nabla v\|^2_{L^2}+\|\sqrt{c}v\|^2_{L^2}=\|v\|_\Omega^2.
	\end{equation*}
Given $u,v\in H^1$, we square both sides of \eqref{Eq: Bilinear a} and apply the Cauchy-Schwarz inequality to obtain 
	 \begin{equation*}
	 \begin{array}{ll}
	 a(u,v)^2&	\displaystyle{=\langle\nabla u,\nabla v\rangle_{L^2}^2+\left(\int_\Omega cuvdx\right)^2+2\langle\nabla u,\nabla v\rangle_{L^2}\int_\Omega cuvdx}\\[0.15in]
	 	& \displaystyle{\le\|\nabla u\|_{L^2}^2\|\nabla v\|_{L^2}^2+\|\sqrt{c}\cdot u\|_{L^2}^2\|\sqrt{c}\cdot v\|_{L^2}^2+2\|\nabla u\|_{L^2}\|\nabla v\|_{L^2}\|\sqrt{c}\cdot u\|_{L^2}\|\sqrt{c}\cdot v\|_{L^2}}\\ [0.15in]
	 	&	\displaystyle{\le (\|\nabla u\|_{L^2}^2+\|\sqrt{c}\cdot u\|_{L^2}^2)(\|\nabla v\|_{L^2}^2+\|\sqrt{c}\cdot v\|_{L^2}^2)}\\[3pt]
	 	&\le\|u\|_\Omega^2\|v\|_\Omega^2
	 	\end{array}
	 \end{equation*}
	 which shows the boundedness of $a(\cdot,\cdot)$. 
	 Since $f\in H^{-1}$, it follows immediately from the Lax-Milgram Theorem that there exists a unique $u\in H^1$ satisfying \eqref{Eq: weakfunctional}.
\end{proof}

Theorem~\ref{thm:Curves} extends well-posedness 
of \eqref{Eq: ellip_pde} to some curves which have singularities such
as the lemniscate of Gerono shown in Fig.~\ref{Fig:Lemniscate}(b)
for appropriate choices of $f$ and $c$. In particular, Theorem~\ref{thm:Curves} assumes minimum regularity requirement on $f$ and $c$. In the following examples in Sections~\ref{sec:wellposedness} and~\ref{Sec:1d}, $f$ and $c$ have much nicer properties so that a classical solution 
exists, which must be the unique
solution by Theorem~\ref{thm:Curves}.  
By combining these properties
together with Theorem~\ref{thm:Curves}
and applying the Solobev embedding theorem, the solutions to \eqref{Eq: ellip_pde} 
satisfy more regularity conditions 
leading to the 
results in Theorem~\ref{Thm: 1d_2nd_g} below.

\begin{example}\label{ex:LemniscateSolve}
Let $\Lambda$ be the lemniscate of Gerono as in Ex.~\ref{ex:IllustrativeSimpleCurves}.
Consider the linear elliptic PDE
\begin{equation}\label{eq:LemniscatePDE}
-\Delta u + c(x)\cdot u = \pi\cdot x_1
 \qquad \hbox{~on~} \Lambda
\qquad
\hbox{~where~}
c(x) = \pi - \frac{4x_1^2 + 4x_2^2 - 3}{8x_1^2x_2^2 + 16x_2^4 - 3x_1^2 - 17x_2^2 + 4}.
\end{equation}
One can observe that $c\geq0$ and $\int_\Lambda c > 0$
by considering Fig.~\ref{fig:LemniscateC} which
plots $c(X(\theta))$ for $\theta\in[0,2\pi]$
where $X(\theta) = (\cos(\theta),\sin(2\theta)/2)$
is the global parameterization of $\Lambda$
as in Ex.~\ref{ex:IllustrativeSimpleCurves}.
Hence, Thm.~\ref{thm:Curves} shows that there exists a unique solution to \eqref{eq:LemniscatePDE}.
In fact, using~\eqref{Eq: LB_local_coord}, it is easy to verify that $u(x) = x_1$ 
solves~\eqref{eq:LemniscatePDE}.  This problem 
will be reconsidered 
numerically in Ex.~\ref{ex:LemniscateSolveNum}.

\begin{figure}[!ht]
    \centering
    \includegraphics[scale=0.2]{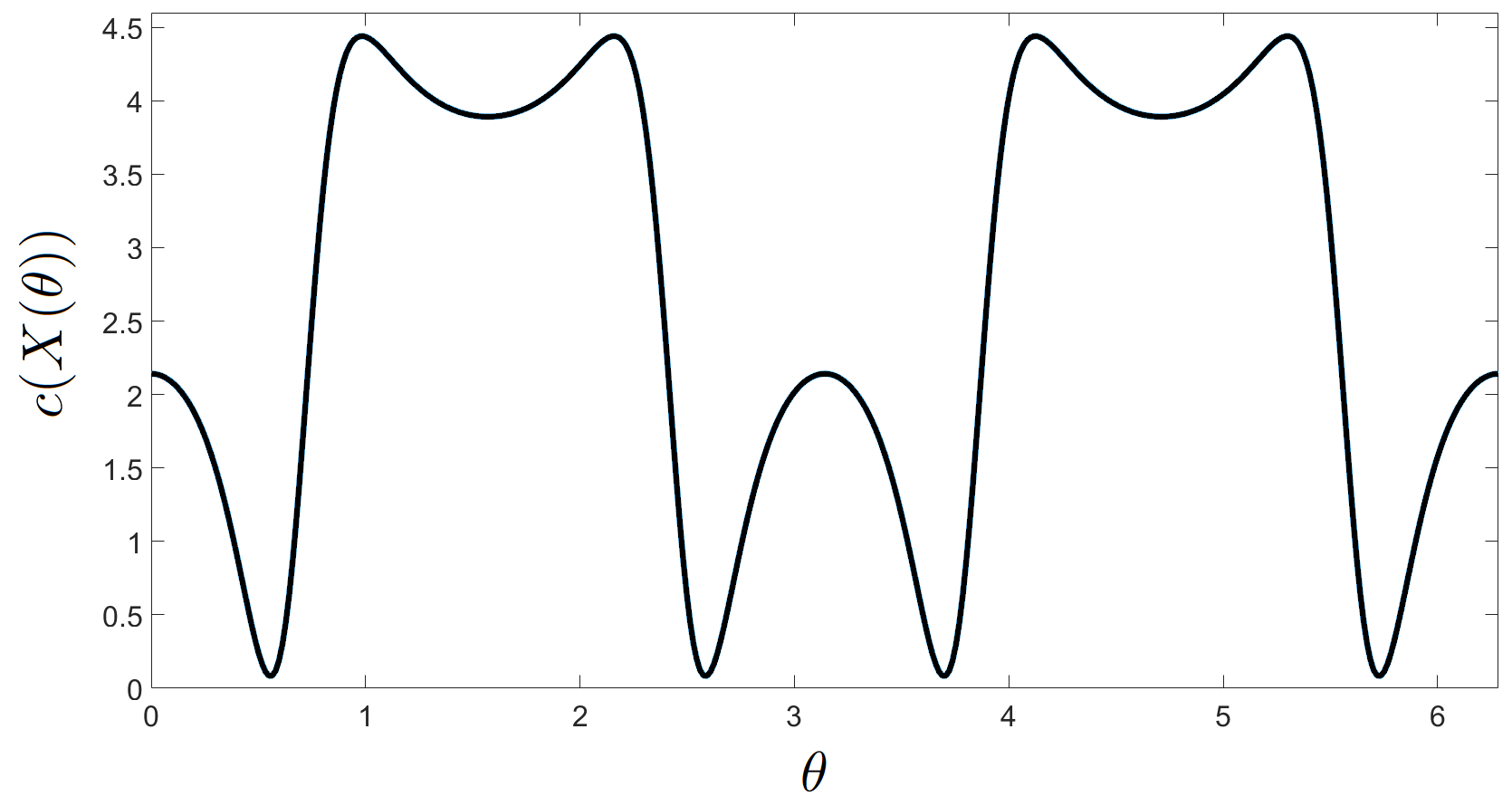}
    \caption{Plot of $c(X(\theta))$ with respect to $\theta\in[0,2\pi]$
    from Ex.~\ref{ex:LemniscateSolve}.
    }
    \label{fig:LemniscateC}
\end{figure}
\end{example}

Building on the existence
and uniqueness result provided by Theorem~\ref{thm:Curves}, the following
develops approaches for 
numerically computing the solution to \eqref{Eq: ellip_pde}
when a globabl parameterization is known.

\subsection{Solving with a global parameterization}\label{sec:global_param}
 
 When the real algebraic curve $\Omega\subset\bR^n$ 
 is a closed parametric $C^1$
 curve with a given parameterization 
 $X:[a,b]\mapsto\Omega$
 such that $X'(t)\neq 0$ for all $t\in[a,b]$,
 solving~\eqref{Eq: ellip_pde} reduces to solving
 an ordindary differential equation 
 on $[a,b]$ with periodic boundary as follows.  
 By definition, $g(t) = \|X'(t)\|^2 > 0$,
 $G(t) = [g(t)]$, and $G^{-1}(t) = [g^{-1}(t)]$
 for $t\in[a,b]$.
With \eqref{Eq: LB_local_coord}, 
the linear elliptic PDE
\eqref{Eq: ellip_pde} simplifies~to
\begin{equation}\label{Eq: ODE_1d}
	\begin{array}{l}
\displaystyle{-\frac{1}{g}}\frac{d^2u}{dt^2}+\frac{1}{2g^2}\frac{dg}{dt}\frac{du}{dt}+c\cdot u=f \qquad\hbox{~on~} [a,b]\\
\end{array}
\end{equation}
with periodic boundary
where, by abuse of notation, $c$ and $f$ are 
the corresponding restrictions.
Therefore,
one can, for example, simply use a finite difference approach
with a three-point stencil to discretize~\eqref{Eq: ODE_1d} as follows.
Given $N$, consider $\Delta t = (b-a)/N$
with \mbox{$t_i = a + i\cdot \Delta t$} for $i = 0,\dots,N$.
Since $u$ is periodic on $[a,b]$ with
$t_0=a$ and $t_N=b$, 
we aim to compute~$u_i$ for $i=0,\dots,N-1$ such that
$u_i \approx u(t_i)$ which amounts to computing
\mbox{$U_N = (u_0,\dots,u_{N-1})^T$} that solves
the linear system
\begin{equation}\label{Eq:1d_disc_g}
    A_N\cdot U_N = F_N
\end{equation}
where $F_N = (f(t_0),\dots,f(t_{N-1}))^T$ and
\begin{equation}\label{eq:AN}
A_N = \left(\begin{array}{cccccc}
 	C_0&R_0&0&\cdots&0&L_0\\
 	L_1&C_1&R_1&0&\cdots&0\\
 	0&L_2&C_1&R_2&\cdots&0\\
 	\vdots&&\ddots&\ddots&\ddots&\vdots\\
 	0&\cdots&0&L_{N-2}&C_{N-2}&R_{N-2}\\
 	R_{N-1}&0&\cdots&0&L_{N-1}&C_{N-1}
 	
 	\end{array}
 	\right)
\end{equation}
such that
$$
L_i=-\frac{1}{g(t_i)\Delta t^2}\left(1+\frac{g^{\prime}(t_i)\Delta t}{4g(t_i)}\right),
~~
  C_i=c(t_i)+\frac{2}{g(t_i)\Delta t^2},
~~\hbox{~and~}~~
  R_i=-\frac{1}{g(t_i)\Delta t^2}\left(1-\frac{g^{\prime}(t_i)\Delta t}{4g(t_i)}\right).$$  
By imposing a stronger condition
on the regularity of the solution $u$ to~\eqref{Eq: ellip_pde}, 
namely $u\in C^4(\Omega)\subset H^1$,
we obtain the following.

 \begin{theorem}\label{Thm: 1d_2nd_g}
If $u\in C^4(\Omega)$ and there exists $\delta >0$
such that $c>\delta$, then the numerical scheme \eqref{Eq:1d_disc_g} is convergent, stable,  	 and has second order accuracy.
 \end{theorem}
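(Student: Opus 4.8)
The plan is to follow the classical paradigm for finite difference schemes on boundary value problems: establish \emph{consistency} (a second-order local truncation error), establish \emph{stability} (a bound on $\|A_N^{-1}\|$ that is uniform in $N$), and then deduce \emph{convergence} with second-order accuracy by combining the two. Throughout I would write $g_i = g(t_i)$, $c_i = c(t_i)$, and use that $X'(t)\neq 0$ on the compact interval $[a,b]$ so that $g=\|X'\|^2$ attains a positive minimum $g_{\min}>0$, while $g$ and $g'$ (which appear in \eqref{eq:AN} and are needed for \eqref{Eq: ODE_1d} to make sense) are continuous and hence bounded on $[a,b]$.

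First I would address consistency. Substituting the exact solution values $u(t_i)$ into the stencil defined by $L_i,C_i,R_i$ in \eqref{eq:AN} and Taylor expanding $u(t_{i\pm1})=u(t_i\pm\Delta t)$ about $t_i$, the central second difference $(u(t_{i+1})-2u(t_i)+u(t_{i-1}))/\Delta t^2$ approximates $u''(t_i)$ with error $\tfrac{\Delta t^2}{12}u^{(4)}(\xi_i)$, and the central first difference $(u(t_{i+1})-u(t_{i-1}))/(2\Delta t)$ approximates $u'(t_i)$ with error $O(\Delta t^2)$. Since the stencil is precisely the discretization of \eqref{Eq: ODE_1d} by these two central differences, the truncation error $\tau := A_N U_N^\ast - F_N$, where $U_N^\ast=(u(t_0),\dots,u(t_{N-1}))^T$, satisfies $\|\tau\|_\infty = O(\Delta t^2)$ because $u\in C^4(\Omega)$.

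Next comes stability, which I expect to be the crux. Let $M$ bound $|g'|/(4g)$ on $[a,b]$. For $\Delta t$ small enough that $M\,\Delta t<1$, both factors $1\pm g_i'\Delta t/(4g_i)$ are positive, so $|L_i|+|R_i|=\tfrac{2}{g_i\Delta t^2}$, while $C_i = c_i+\tfrac{2}{g_i\Delta t^2}$. Hence every row of $A_N$ — including the two rows carrying the periodic wrap-around entries $L_0$ and $R_{N-1}$, which still contain exactly one $L$, one $C$, and one $R$ — satisfies
\begin{equation*}
|C_i|-\left(|L_i|+|R_i|\right) = c_i \geq \delta > 0,
\end{equation*}
so $A_N$ is strictly diagonally dominant uniformly in $N$. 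The standard max-norm argument (in $A_N x = b$, pick the index $k$ where $|x_k|$ is maximal and isolate $a_{kk}x_k$) then yields $\|A_N^{-1}\|_\infty \leq 1/\delta$, a bound independent of $N$; in particular $A_N$ is invertible, so \eqref{Eq:1d_disc_g} is uniquely solvable.

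Finally, subtracting $A_N U_N = F_N$ from $A_N U_N^\ast = F_N + \tau$ gives $A_N(U_N^\ast - U_N) = \tau$, whence
\begin{equation*}
\|U_N^\ast - U_N\|_\infty \leq \|A_N^{-1}\|_\infty\,\|\tau\|_\infty \leq \tfrac{1}{\delta}\,O(\Delta t^2) = O(\Delta t^2),
\end{equation*}
which is convergence with second-order accuracy. The main obstacle is the uniform-in-$N$ stability estimate: everything hinges on converting the hypothesis $c>\delta$ into the strict diagonal dominance gap above, and on checking that the cyclic tridiagonal (periodic) structure of $A_N$ preserves this gap in the first and last rows. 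The consistency and convergence steps are then routine Taylor-expansion and linear-algebra arguments.
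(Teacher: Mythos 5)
Your proposal is correct and follows the paper's overall strategy --- consistency by Taylor expansion, stability via strict diagonal dominance of $A_N$ together with a uniform bound on $\|A_N^{-1}\|_\infty$, and convergence by combining the two --- but your execution of the stability step is genuinely different and, in fact, sounder than the paper's. The paper bounds $|L_i|$ and $|R_i|$ \emph{separately} by $\frac{\delta}{4}+\frac{1}{g(t_i)\Delta t^2}$ for small $\Delta t$; this cannot hold in general, since once the factors $1\pm g'(t_i)\Delta t/(4g(t_i))$ are positive, one of $|L_i|,|R_i|$ equals $\frac{1}{g(t_i)\Delta t^2}+\frac{|g'(t_i)|}{4g(t_i)^2\Delta t}$, and the second term grows without bound as $\Delta t\to 0$, so it is not dominated by the constant $\frac{\delta}{4}$. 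Your route --- observing that the $g'$ contributions cancel exactly in the sum, so that $|L_i|+|R_i|=\frac{2}{g(t_i)\Delta t^2}$ and the dominance gap is exactly $c(t_i)\geq\delta$ --- avoids this difficulty and yields the cleaner constant $\|A_N^{-1}\|_\infty\leq 1/\delta$; your max-norm argument is precisely the Ahlberg--Nilson--Varah bound \cite{Varah1975,AN1963} that the paper cites, applied to the full gap $\delta$ rather than the paper's $\delta/2$, hence $1/\delta$ in place of $2/\delta$. You are also slightly more careful on consistency: the paper's expansion \eqref{Eq:Taylor_uniform} records only the $u''''$ remainders coming from the second difference and silently drops the $O(\Delta t^2)$ error of the central first difference multiplying $g'(t_i)/(2g(t_i)^2)$, which you account for; the order is unchanged either way, so both arguments reach second-order accuracy, and your explicit check of the two periodic wrap-around rows of \eqref{eq:AN} closes a point the paper leaves implicit.
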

\begin{proof}
Using Taylor series expansion, we have
	\begin{equation}\label{Eq:Taylor_uniform}
		\begin{array}{cc}
		u(x(t_{i+1}))=u(x(t_i))+\Delta t u^{\prime}(x(t_i))+\frac{\Delta t^2}{2}u^{\prime\prime}(x(t_i))+\frac{\Delta t^3}{3!}u^{\prime\prime\prime}(x(t_i))+\frac{\Delta t^4}{4!}u^{\prime\prime\prime\prime}(x(\eta_i)),\\
		u(x(t_{i-1}))=u(x(t_i))-\Delta t u^{\prime}(x(t_i))+\frac{\Delta t^2}{2}u^{\prime\prime}(x(t_i))-\frac{\Delta t^3}{3!}u^{\prime\prime\prime}(x(t_i))+\frac{\Delta t^4}{4!}u^{\prime\prime\prime\prime}(x(\xi_i)),
		\end{array}
	\end{equation}
	where $\eta_i\in [t_i,t_{i+1}]$ and $\xi_i\in[t_{i-1},t_i]$. Therefore,
	\begin{equation*}
		\dfrac{u(x(t_{i+1}))-2u(x(t_i))+u(x(t_{i-1}))}{\Delta t^2}=u^{\prime\prime}(x(t_i))-\frac{\Delta t^2}{4!}(u^{\prime\prime\prime\prime}(x(\eta_i))+u^{\prime\prime\prime\prime}(x(\xi_i))).
	\end{equation*}
	This expression combined with \eqref{Eq:1d_disc_g} yields \begin{equation}\label{Eq:1d_exact_sch_g}
		A_N\left(\begin{array}{c}
		u(x(t_0))\\ \vdots\\u(x(t_{N-1}))
		\end{array}\right)+\frac{\Delta t^2}{4!}\left(\begin{array}{c}
		u^{\prime\prime\prime\prime}(x(\eta_0)) {+ u^{\prime\prime\prime\prime}(x(\xi_0))}\\ \vdots\\u^{\prime\prime\prime\prime}(x(\eta_{N-1})) {+ u^{\prime\prime\prime\prime}(x(\xi_{N-1}))}
		\end{array}\right)=F_N.
	\end{equation}
	Denoting
	\[
	u_N=\left(\begin{array}{c}
	u(x(t_0))\\ \vdots\\u(x(t_{N-1}))
	\end{array}\right) \hbox{~~~~and~~~~} u^{\prime\prime\prime\prime}_N= \left(\begin{array}{c}
	u^{\prime\prime\prime\prime}(x(\eta_0)) {+ u^{\prime\prime\prime\prime}(x(\xi_0))}\\ \vdots\\u^{\prime\prime\prime\prime}(x(\eta_{N-1})) {+ u^{\prime\prime\prime\prime}(x(\xi_{N-1}))}
	\end{array}\right),
	\] 
	subtracting \eqref{Eq:1d_disc_g} from \eqref{Eq:1d_exact_sch_g} yields
	\begin{equation*}
		A_N(u_N-U_N)=-\frac{\Delta t^2}{4!}u^{\prime\prime\prime\prime}_N.
	\end{equation*}
	Thus, the error satisfies
	\begin{equation}\label{Eq:1d_bound_gerr}
		\|u_N-U_N\|_{\infty}=\frac{\Delta t^2}{4!}\|A_N^{-1}u^{\prime\prime\prime\prime}_N\|_{\infty}\le \frac{\Delta t^2}{4!}\|A_N^{-1}\|_{\infty}\|u^{\prime\prime\prime\prime}_N\|_\infty.
	\end{equation}
    For sufficiently small $\Delta t$, 
	one can assume that $C_i>\delta + \frac{2}{g(t_i)\Delta t^2}>0$ while 
    both $|L_i|$ and $|R_i|$ are bounded above by, say,
    $\frac{\delta}{4} + \frac{1}{g(t_i)\Delta t^2}$.
    Thus, we have
	\begin{equation*}
		C_i-(|L_i|+|R_i|)>\frac{\delta}{2} > 0.
	\end{equation*}
    Hence, $A_N$ is a strictly diagonally dominant matrix 
    so that $A_N$ is invertible where the real parts
    of the eigenvalues are positive so the stability of the scheme
    follows immediately.
    Moreover, the Ahlberg-Nilson-Varah bound \cite{Varah1975,AN1963} yields
    $\|A_N^{-1}\|_{\infty} \leq \frac{2}{\delta} < \infty$
	showing that 
	\begin{equation*}
		\|u_N-U_N\|_{\infty} \le \frac{2\cdot \Delta t^2}{\delta\cdot 4!}\|u^{\prime\prime\prime\prime}_N\|_\infty.
	\end{equation*}
	Since $u\in C^4(\Omega)$, the global error defined above for scheme \eqref{Eq:1d_disc_g} is bounded and converges to 0 as the mesh size goes to zero. In particular, the scheme is convergent with second order accuracy.
\end{proof}
 \everymath{}

Of course, one can repeat this construction using
a larger stencil and imposing a stronger condition
on the regularity of the solution to obtain
higher order accuracy.  The following illustrates
the convergence rate for the three-point stencil
using a five-point stencil with many points 
to estimate the error.

\begin{example}\label{ex:Ellipse_comparison1}
Consider solving 
\begin{equation}\label{eq:Ellipse_comparison1}
	-\Delta u + u = x \qquad \hbox{~on~~~} x^2 + 50y^2 = 1.
\end{equation} 
Using the global parameterization
\begin{equation}\label{eq:EllipseGlobalParam}
X(\theta) = \left(\sin\theta,\frac{\cos\theta}{\sqrt{50}}\right),
~~~~~~~\theta\in[0,2\pi],
\end{equation}
one aims to solve
$$-\frac{50}{50-49\sin^2\theta} u_{\theta\theta} -
\frac{2450\sin\theta\cos\theta}{(50-49\sin^2\theta)^2}
u_\theta + u = \sin\theta \qquad\hbox{~on~} [0,2\pi]$$
such that $u$ is periodic on $[0,2\pi]$.
 Table~\ref{tab:ellipse_comparison1} lists
 the error and convergence order
which computationally verifies second order
convergence as expected by Thm.~\ref{Thm: 1d_2nd_g}.
Here, the error is computed by comparing against
the solution obtained using a five-point stencil
with $N=\mbox{20,480}$.

\begin{table}[h!]
\begin{center}
\begin{tabular}{P{20mm} | P{20mm} | P{20mm} }
    $N$ & $L_\infty$ Error & Order  \\
    \hline \hline
    160&2.043$\cdot 10^{-4}$&---\\
    320&5.099$\cdot 10^{-5}$&2.002\\
    640&1.274$\cdot 10^{-5}$&2.000\\
    1280&3.185$\cdot 10^{-6}$&2.000\\
\end{tabular}
\end{center}
\caption{Comparison of error for solving \eqref{eq:Ellipse_comparison1} using
the global parameterization \eqref{eq:EllipseGlobalParam}.}
\label{tab:ellipse_comparison1}
\end{table}
\end{example}

\section{Local parameterization for curves}\label{Sec:1d}

When there is no readily available
global parameterization, one can
solve \eqref{Eq: ellip_pde}
via a finite difference method
based on local parameterization at each
sample point.  
The following proceeds
by first considering a numerical cell decomposition using
numerical algebraic geometry, then analyzing a local
tangential parameterization at smooth points,
and finally considering singular points.

\subsection{Curve decomposition using numerical algebraic geometry}\label{sec:DecompNAG}

One approach for decomposing a curve is to utilize
a numerical cellular decomposition \cite{BertiniReal,RealCurveDecomp}
computed using numerical algebraic geometry \cite{BHSW:BertiniBook,SW05}.
A cellular decomposition of a curve is 
a disjoint union of finitely many vertices $V$,
which are simply points on the curve,
and edges $E$, which are portions of the curve diffeomorphic to an interval in $\bR$.  The endpoints of each edge are vertices.  In particular, $V$ must contain the 
set of singular points of the curve.

\begin{example}\label{ex:LemniscateCellDecomp}
Reconsider the lemniscate of Gerono $\Lambda\subset\bR^2$
defined in Ex.~\ref{ex:IllustrativeSimpleCurves}
and shown in Fig.~\ref{fig:IntroPlot}(b).
Figure~\ref{fig:LemniscateCellDecomp} illustrates
a cellular decomposition of $\Lambda$ consisting
of 3 vertices and~4~edges. 

\begin{figure}[!ht]
    \centering
    \includegraphics[scale=0.6]{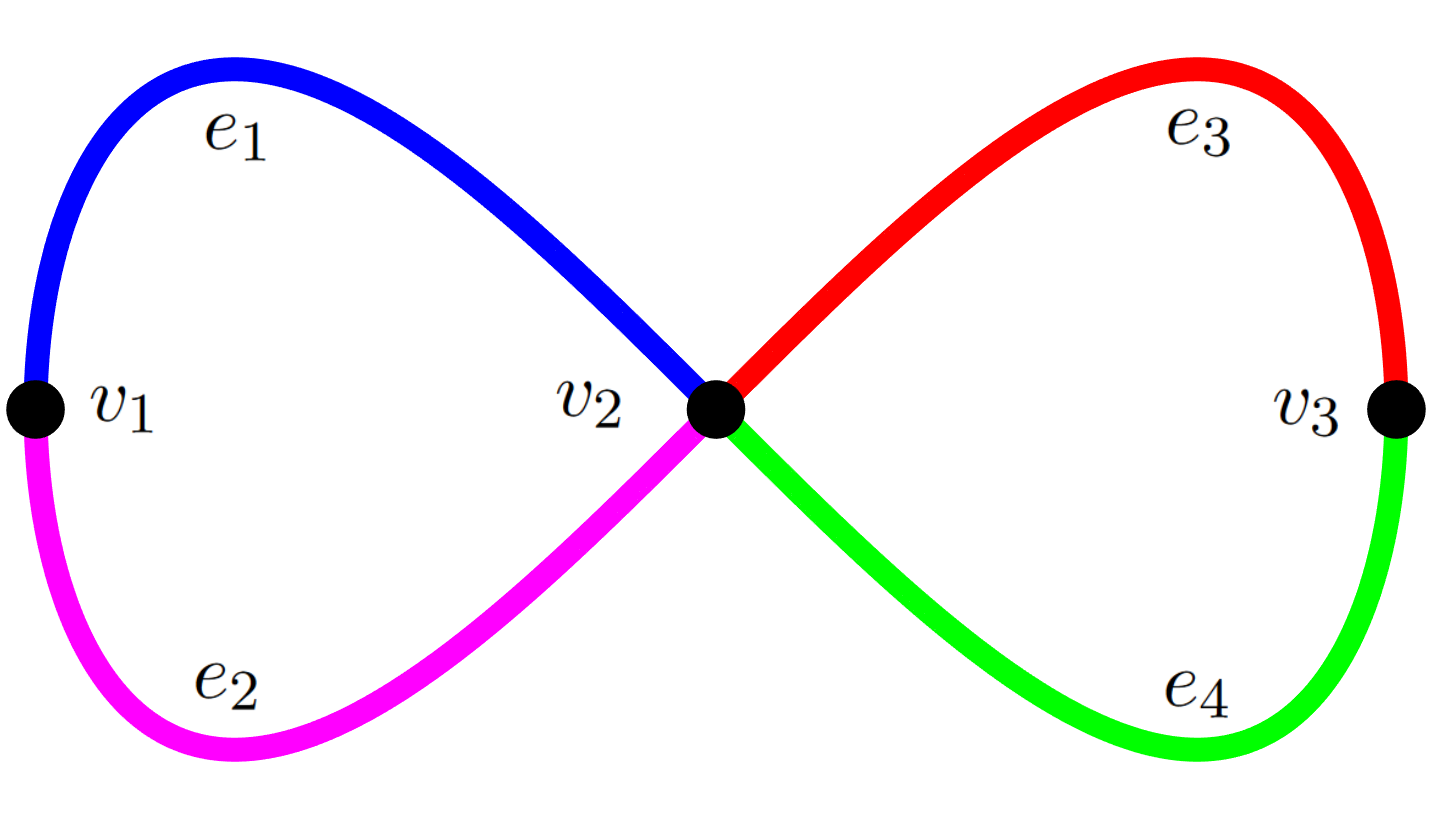}
    \caption{Cellular decomposition for lemniscate of Gerono
    with vertices $v_i$ and edges $e_j$}
    \label{fig:LemniscateCellDecomp}
\end{figure}
\end{example}

A numerical cellular decomposition simply represents
each edge of a cellular decomposition 
by an interior point along with a homotopy
that permits the tracking along the edge starting
from the interior point.  From this numerical representation,
one can perform computations on each edge.  
For example, one can sample points along each edge
and construct a Chebyshev interpolant as described in \cite{BertiniCheby}.  From the Chebyshev interpolant,
one can easily approximate the arc length 
of each edge and approximate mesh points in the desired
structure, for example, uniform in arc~length.

At each point on the curve, there is a local irreducible
decomposition of the curve at the point 
which can be computed using numerical algebraic geometry
\cite{LocalNID}.  
A curve is locally irreducible at every smooth point
on the curve and is locally diffeomorphic to the tangent line.
This is utilized next to construct a tangential parameterization at smooth points.
The only points on a curve where the curve could be
locally reducible is at a singular point.
Hence, at each singular point on the curve,
the approach in \cite{LocalNID} uses the 
local monodromy group structure computed 
using a homotopy to determine the locally irreducible components of the curve at a singular point.  
Moreover, each locally irreducible component has a 
well-defined local degree \cite{LocalNID}.  
If a component has local degree equal to $1$, 
then it is locally diffeomorphic to a tangent line.

\begin{example}
Continuing from Ex.~\ref{ex:LemniscateCellDecomp},
all points are smooth points of $\Lambda$ except
$v_2=(0,0)$.  At~$v_2$,~$\Lambda$ decomposes
into two locally irreducible components
each of local degree $1$ corresponding to each 
of the two local tangent directions at $v_2$.
\end{example}

Local irreducible decomposition is important for
solving~\eqref{Eq: ellip_pde} since 
Theorem~\ref{thm:Curves} enforces that 
the solution is continuous along each locally irreducible
component.  Hence, a numerical solving scheme
needs to allow for a singular point to take a different
value along each locally irreducible component passing
through the singular point
as illustrated in Figure~\ref{fig:IntroPlot}(b).

\subsection{Tangential parameterization at smooth points}\label{sec:localTangent}
 
 The following uses an approach based on a local tangential parameterization for a smooth curve 
 to compute $x'(s)$ and obtain $g$ which greatly simplifies the calculation of coefficients for the numerical scheme. 
 Let $\pi_N = \{p_0,p_1,\dots,p_{N-1}\}$ consist
 of $N$ mesh points uniformly distributed in arc length
using a cyclic ordering with $p_i = p_{N+i}$ as needed.
Define $[p_{i-1},p_{i+1}]$
 to be the segment of the curve passing through
 points $p_{i-1}$, $p_i$, and $p_{i+1}$.  
Let $v_i$ be a unit tangent vector to the curve
at $p_i$ and consider $\ell_i(t) = p_i + t v_i$
which parameterizes the tangent line to the curve
at $p_i$.
 Consider the map $\alpha_i:[p_{i-1},p_{i+1}]\rightarrow \bR$
 defined by $\alpha_i(p) = (p-p_i)\cdot v_i$.
 By replacing $v_i$ by $-v_i$ as needed and taking~$N$
 large enough, $\alpha_i$ is a diffeomorphism from
$[p_{i-1},p_{i+1}]$ to $[\alpha_i(p_{i-1}),\alpha_i(p_{i+1})]$
where 
$$\alpha_i(p_{i-1}) < 0 = \alpha_i(p_i) < \alpha_i(p_{i+1}).$$
See Fig.~\ref{fig:1d-tangentialparam} for an illustration of this
tangential parameterization construction.
  
\begin{figure}[!hb]
    \centering
    \includegraphics[scale = 0.3]{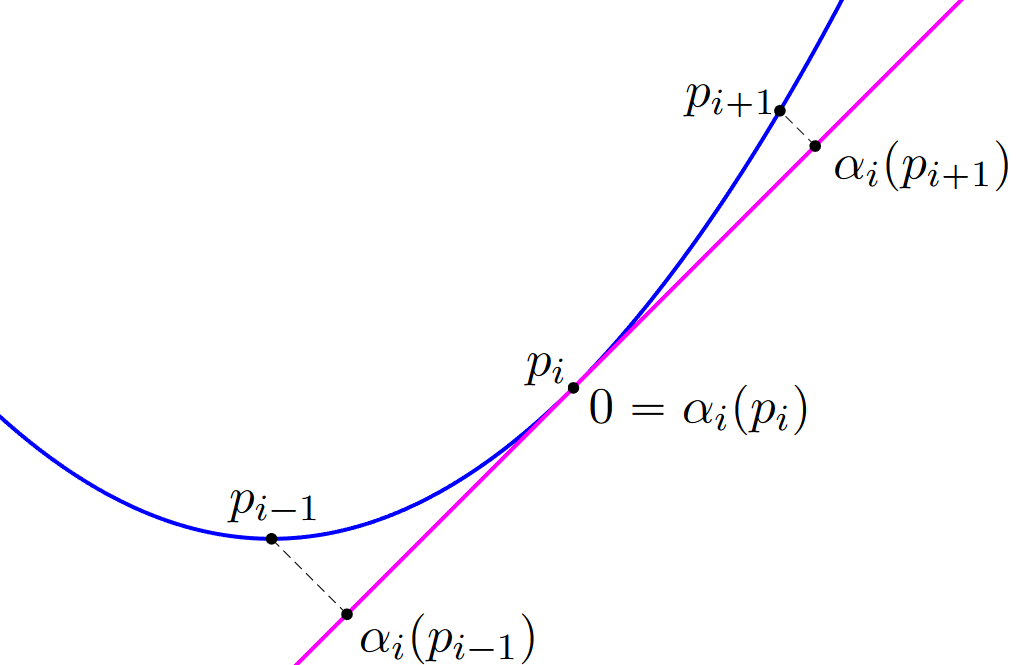}
    \caption{Illustration of tangential parameterization. }
    \label{fig:1d-tangentialparam}
\end{figure}

Let $X_i:[\alpha_i(p_{i-1}),\alpha_i(p_{i+1})]\rightarrow[p_{i-1},p_{i+1}]$ be the inverse of $\alpha_i$.  Locally, \eqref{Eq: ODE_1d} using $X_i(t)$
is simplified at $t = 0$ based on the following.

\begin{theorem}\label{Lem: 1d_g=1}
With the setup described above, $X_i'(0) = v_i$.  
Moreover, for corresponding metric tensor $G(t)$, 
\eqref{Eq: LB_local_coord} becomes
$\Delta u(0) = \frac{\partial^2 u(0)}{\partial t^2}$.
\end{theorem}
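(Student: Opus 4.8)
The plan is to exploit the defining identity of $X_i$ as the inverse of the tangential projection $\alpha_i$ and differentiate it twice. Since $\alpha_i(X_i(t)) = t$ and $\alpha_i(p) = (p - p_i)\cdot v_i$, the explicit form of this identity is
$$(X_i(t) - p_i)\cdot v_i = t$$
for all $t$ in the domain. Differentiating once in $t$ gives $X_i'(t)\cdot v_i = 1$, and differentiating again gives $X_i''(t)\cdot v_i = 0$; both are identities in $t$, so in particular they hold at $t = 0$. These two scalar relations are the engine behind both assertions.

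For the first claim, note that $X_i(0) = p_i$ because $\alpha_i(p_i) = 0$, so $X_i'(0)$ is a tangent vector to the curve at $p_i$. Since $\Omega$ is one-dimensional and smooth at $p_i$ with unit tangent $v_i$, the tangent space there is spanned by $v_i$, whence $X_i'(0) = \lambda v_i$ for some scalar $\lambda$. The identity $X_i'(0)\cdot v_i = 1$ together with $\|v_i\| = 1$ then forces $\lambda = 1$, giving $X_i'(0) = v_i$.

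For the second claim, recall that for a curve ($d = 1$) the metric reduces to the single entry $g(t) = \|X_i'(t)\|^2$, and \eqref{Eq: LB_local_coord} collapses to the operator appearing in \eqref{Eq: ODE_1d}, namely $\Delta u = \frac{1}{g}u_{tt} - \frac{1}{2g^2}g'u_t$. The strategy is simply to evaluate its two coefficients at $t = 0$. Using $X_i'(0) = v_i$ from the first part yields $g(0) = \|v_i\|^2 = 1$. Differentiating $g(t) = X_i'(t)\cdot X_i'(t)$ gives $g'(t) = 2\,X_i'(t)\cdot X_i''(t)$, so $g'(0) = 2\,v_i\cdot X_i''(0) = 0$ by the second differentiated identity. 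Substituting $g(0) = 1$ and $g'(0) = 0$ kills the first-order term and normalizes the second-order term, leaving $\Delta u(0) = u_{tt}(0) = \frac{\partial^2 u(0)}{\partial t^2}$.

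There is no serious obstacle here: the content of the statement is that an arc-length-like normalization is automatic at the base point of a tangential parameterization, because projecting onto the unit tangent makes the parameter agree with displacement to first order (so $g(0) = 1$) and the tangential component of the acceleration vanishes (so $g'(0) = 0$). The only point requiring care is justifying that $X_i'(0)$ is a scalar multiple of $v_i$, which uses smoothness of $\Omega$ at $p_i$; the two derivatives of $X_i$ are available because $\alpha_i$ is a diffeomorphism on $[p_{i-1},p_{i+1}]$, as established in the surrounding construction.
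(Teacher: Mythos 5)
Your proof is correct, and its computational core coincides with the paper's: both establish $v_i\cdot X_i'(t)=1$, differentiate to get $v_i\cdot X_i''(t)=0$, and conclude $g(0)=1$ and $g'(0)=0$, which normalizes the second-order term and kills the first-order term in \eqref{Eq: LB_local_coord}. The only genuine difference is how $X_i'(0)=v_i$ is obtained. The paper augments the projection equation with $F(X_i(t))=0$ and applies the implicit function theorem to the bordered system with Jacobian $\left[\begin{smallmatrix} v_i^T \\ JF(X_i(t))\end{smallmatrix}\right]$, reading off $X_i'(0)=v_i$ from $v_i\cdot v_i=1$ and $JF(X_i(0))v_i=0$; this route has the side benefit that smoothness of $X_i$ (hence existence of $X_i''$) comes for free from polynomiality of $F$ and invertibility of that bordered Jacobian. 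You instead differentiate only the projection identity $(X_i(t)-p_i)\cdot v_i=t$ and argue geometrically: $X_i'(0)$ lies in the tangent space at the smooth point $p_i$, which is one-dimensional and spanned by $v_i$, so $X_i'(0)=\lambda v_i$, and the normalization $X_i'(0)\cdot v_i=1$ forces $\lambda=1$. That is equivalent in substance --- your tangent-space claim is exactly what $JF(p_i)v_i=0$ encodes --- but one technical point deserves care: asserting that ``$\alpha_i$ is a diffeomorphism'' only directly supplies one derivative of $X_i$, whereas your argument uses $X_i''$. To close this you should either note that a smooth point of a real algebraic curve is locally a real-analytic (in particular $C^2$) manifold, so the inverse of the tangential projection is automatically smooth, or simply take the paper's implicit-function-theorem route, which delivers all derivatives at once.
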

\begin{proof}
For $t\in[\alpha_i(p_{i-1}),\alpha_i(p_{i+1})]$, 
one knows that $X_i(t)$ satisfies
$$\left[\begin{array}{c} 
(X_i(t) - p_i)\cdot v_i - t \\
F(X_i(t)) \end{array}\right] = 0.
$$
By the implicit function theorem,
\begin{equation}\label{Eq: Jacob}
X_i'(t) = -\left[\begin{array}{c}
v_i^T \\ 
JF(X_i(t)) \end{array}\right]^{-1}
\left[\begin{array}{c} -1 \\ 0 \end{array}\right].
\end{equation}
Since $v_i\cdot v_i = 1$ and $JF(X_i(0)) v_i = 0$, it immediately
follows from \eqref{Eq: Jacob} that $X_i'(0) = v_i$.
With $g(t) = G(t) = \|X_i'(t)\|^2$, 
$g_i(0) = 1$.  Additionally, 
from the first row of \eqref{Eq: Jacob}, we know $v_i\cdot X_i'(t) = 1$
so that $v_i\cdot X_i''(t) = 0$.  Hence, at $t = 0$, $X_i'(0)\cdot X_i''(0) = 
v_i \cdot X_i''(0) = 0$ which immediately yields that 
$\frac{dg_i(0)}{dt} = 0$ and the result follows.
\end{proof}

\begin{example}\label{ex:EllipseLocal}
To illustrate, consider the ellipse
$x^2+10y^2=1$ at
$$p=\left[\begin{array}{c} 1 \\ 0 \end{array}\right]
\hbox{~~with~~}
v=\left[\begin{array}{c} 0 \\ 1 \end{array}\right]
\hbox{~~so that~~}
X(t) = \left[\begin{array}{c}
\sqrt{1-10t^2} \\ t
\end{array}\right].
$$
Clearly, $X(0) = p$.  Since 
$$X'(t) = \left[\begin{array}{c} -10t/\sqrt{1-10t^2}
\\ 1
\end{array}\right],$$
it is clear that $X'(0) = v$.
Moreover, for $t$ near $0$, 
\eqref{Eq: LB_local_coord} becomes
$$
\Delta u(t) =
\sqrt{\frac{1-10t^2}{1+90t^2}}\cdot \frac{d}{dt}
\left(
\sqrt{\frac{1-10t^2}{1+90t^2}}\frac{du(t)}{dt}
\right)
=
\frac{1-10t^2}{1+90t^2}\frac{d^2u(t)}{dt}
-\frac{100t}{(1+90t^2)^2}\frac{du(t)}{dt}
$$
which yields $\Delta u(0) = \frac{d^2 u(0)}{dt^2}$
in accordance with Thm.~\ref{Lem: 1d_g=1}.
\end{example}

Combining with \eqref{Eq: ODE_1d}, one can develop a local
discretization to approximate $u(p_i)$ for each $i$
which is simplified due to Theorem~\ref{Lem: 1d_g=1}.  
For example, with $u_i\approx u(p_i)$, 
a three-point stencil yields the following discretization:
\begin{equation}\label{Eq: 1-D discrete_system}
L_i\cdot u_{i-1} + C_i\cdot u_i + R_i\cdot u_{i+1} = f(p_i)
\end{equation}
where
$$
\mbox{\small $
L_i = \dfrac{-2}{\alpha_i(p_{i-1})(\alpha_i(p_{i-1}) - \alpha_i(p_{i+1}))}, ~~
C_i = c(p_i) + \dfrac{-2}{\alpha_i(p_{i+1})\alpha_i(p_{i-1})}, ~~
R_i = \dfrac{-2}{\alpha_i(p_{i+1})(\alpha_i(p_{i+1}) - \alpha_i(p_{i-1}))}.
$}
$$
Writing $U_N = (u_0,\dots,u_{N-1})^T$, and $F_N = (f(p_0),\dots,f(p_{N-1}))^T$, \eqref{Eq: 1-D discrete_system} 
yields the linear system
$$B_N\cdot U_N = F_N$$
where $B_N$ is the same as $A_N$ in \eqref{eq:AN} 
with the localized versions of $L_i$, $C_i$, and $R_i$ above.
In particular, note that this does not require computing
$g_i$.

\begin{theorem}\label{thm:1Dlocal}
If $u\in C^3(\Omega)$ and there exists $\delta >0$
such that $c>\delta$, the finite difference scheme arising
from \eqref{Eq: 1-D discrete_system} is convergent and at least first order accurate in arc length mesh size.  
\end{theorem}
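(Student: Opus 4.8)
The plan is to follow the same consistency-plus-stability framework used for Theorem~\ref{Thm: 1d_2nd_g}, adapted to the non-uniform stencil produced by the tangential parameterization. Write $u_N = (u(p_0),\dots,u(p_{N-1}))^T$ for the exact solution sampled at the mesh points and let $U_N$ solve $B_N U_N = F_N$. I would abbreviate the local tangential coordinates of the three stencil points by $h_- = \alpha_i(p_{i-1}) < 0 < h_+ = \alpha_i(p_{i+1})$ and set $w(t) = u(X_i(t))$, so that $w(h_-) = u(p_{i-1})$, $w(0) = u(p_i)$, and $w(h_+) = u(p_{i+1})$. By Theorem~\ref{Lem: 1d_g=1}, $\Delta u(p_i) = w''(0)$, so the row of $B_N$ centered at $p_i$ is exactly the three-point approximation of $-w''(0) + c(p_i)\,w(0)$. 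Since $B_N$ inherits the cyclic band structure of $A_N$ in \eqref{eq:AN}, the statement reduces to bounding a truncation error and controlling $\|B_N^{-1}\|_\infty$.

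For consistency, I would first relate the tangential spacings to the arc-length mesh size $\Delta s$. Parameterizing the curve by arc length with unit tangent $v_i = X_i'(0)$ and using that the curvature vector is orthogonal to $v_i$, a Taylor expansion gives $h_\pm = \pm\Delta s + O(\Delta s^3)$; in particular each $|h_\pm|$ is comparable to $\Delta s$ and $h_+ - h_-$ is bounded away from $0$ relative to $\Delta s$, so the stencil stays well-conditioned. A direct check shows the coefficients in \eqref{Eq: 1-D discrete_system} annihilate constants and linears and reproduce $-w''(0)$ on quadratics, i.e. $L_i + (C_i - c(p_i)) + R_i = 0$, $L_i h_- + R_i h_+ = 0$, and $\tfrac12(L_i h_-^2 + R_i h_+^2) = -1$. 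Expanding $w(h_\pm)$ to third order with Lagrange remainder (which is all that $u\in C^3(\Omega)$ permits) then yields a local truncation error $E_i = \tfrac16\left(L_i h_-^3\, w'''(\zeta_-) + R_i h_+^3\, w'''(\zeta_+)\right)$ for some $\zeta_\pm$ between $0$ and $h_\pm$; since $L_i h_-^3$ and $R_i h_+^3$ are each $O(\Delta s)$, this gives $|E_i| \le C\,\Delta s\,\|w'''\|_\infty$. Collecting these into a vector $\tau_N$ with $\|\tau_N\|_\infty = O(\Delta s)$ gives $B_N u_N = F_N + \tau_N$.

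Stability follows essentially as in Theorem~\ref{Thm: 1d_2nd_g}. Because $h_+ h_- < 0$, one has $C_i = c(p_i) - 2/(h_+ h_-)$ with $-2/(h_+ h_-) > 0$, while $L_i, R_i < 0$; a short computation gives $|L_i| + |R_i| = -2/(h_+ h_-)$, so $C_i - (|L_i| + |R_i|) = c(p_i) > \delta$. Hence $B_N$ is strictly diagonally dominant with positive diagonal, its eigenvalues have positive real part by Gershgorin, and the Ahlberg-Nilson-Varah bound \cite{Varah1975,AN1963} yields $\|B_N^{-1}\|_\infty \le 1/\delta$ uniformly in $N$. Combining the two ingredients, $B_N(u_N - U_N) = \tau_N$ gives $\|u_N - U_N\|_\infty \le \|B_N^{-1}\|_\infty \|\tau_N\|_\infty \le \tfrac{1}{\delta}\,O(\Delta s)$, which is convergence with first-order accuracy in the arc-length mesh size.

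I expect the main obstacle to be the truncation estimate rather than the stability argument. The spacings are in fact nearly symmetric, $h_+ + h_- = O(\Delta s^3)$, so with $C^4$ regularity the cubic terms would cancel and give second order exactly as in Theorem~\ref{Thm: 1d_2nd_g}; under only $C^3$ this cancellation cannot be exploited, and one must settle for bounding each remainder term separately, which is precisely what limits the guarantee to first order. A secondary technical point is ensuring that the constant $C$ and the bounds on $w'''$ and on the chart derivatives $X_i$ are uniform over all mesh points, which holds since $\Omega$ is compact and the mesh points used here are smooth points.
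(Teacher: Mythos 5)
Your proposal is correct and follows essentially the same route as the paper: the paper's proof is a one-line reduction to the argument of Theorem~\ref{Thm: 1d_2nd_g} (Taylor-expansion consistency for the now-unstructured stencil, plus strict diagonal dominance and the Ahlberg--Nilson--Varah bound for stability), which is exactly the framework you carry out in detail. Your elaboration — the exact stencil identities, the observation that $C_i - (|L_i|+|R_i|) = c(p_i) > \delta$, the $O(\Delta s)$ remainder under $C^3$ regularity, and the remark that arc-length near-symmetry $h_+ + h_- = O(\Delta s^3)$ would restore second order under $C^4$ — correctly fills in everything the paper leaves implicit, including why the paper claims only "at least first order."
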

\begin{proof}
The proof is similar to that of Theorem~\ref{Thm: 1d_2nd_g} 
except that \eqref{Eq: 1-D discrete_system} uses an 
unstructured three-point stencil to approximate $\Delta u(p_i)$, 
which becomes the second-order
central difference scheme when $\alpha_i(p_{i+1}) = -\alpha_i(p_i)$.
\end{proof}

\begin{remark}\label{Remark:HigherOrderLocal}
By imposing a stronger condition on the regularity of the solution $u$
as well as increasing the size of the domain $\alpha_i$ 
for which each remains a diffeomorphism, one can naturually
replace the three-point stencil used in \eqref{Eq: 1-D discrete_system} 
with larger stencils and obtain similar results to 
Theorem~\ref{thm:1Dlocal} with higher-order convergence.    
\end{remark}

\begin{example}\label{ex:Ellipse_comparison}
For $a>0$, consider solving
\begin{equation}\label{eq:Ellipse_comparison}
	-\Delta u + u = x \qquad \hbox{~on~} x^2 + ay^2 = 1.
\end{equation} 
Using the global parameterization $\left(\cos t,\dfrac{\sin t}{\sqrt{a}}\right)$ of the ellipse, we first compare 
the global method in Section~\ref{sec:global_param} 
with the local tangential parameterization.
Table~\ref{tab:ellipse_comparison} compares using a three-point
stencil for both when $a=50$ where the errors are computed
by comparing against an approximate solution 
computed using a five-point stencil with the global parameterization
using 20,480 points.

\begin{table}[h!]
\begin{center}
\begin{tabular}{P{20mm} | P{20mm} | P{20mm} | P{20mm} | P{20mm} }
    \hhline{~|====}
     \multicolumn{1}{P{20mm}|}{} & \multicolumn{2}{P{40mm}|}{Global parameterization} & \multicolumn{2}{P{40mm}}{Local tangential parameteization} \\
    \hhline{=====}
    $N$ & $L_\infty$ Error & Order & $L_\infty$ Error & Order  \\
    \hline \hline
    160&2.043$\cdot 10^{-4}$&---&2.459$\cdot 10^{-3}$&---\\
    320&5.099$\cdot 10^{-5}$&2.002&6.401$\cdot 10^{-4}$&1.942\\
    640&1.274$\cdot 10^{-5}$&2.000&1.630$\cdot 10^{-4}$&1.975\\
    1280&3.185$\cdot 10^{-6}$&2.000&4.094$\cdot 10^{-5}$&1.992\\
\end{tabular}
\end{center}
\caption{Comparison of global and local parameterization methods
for solving \eqref{eq:Ellipse_comparison} when $a=50$.}
\label{tab:ellipse_comparison}
\end{table}

We next compare using a three-point stencil and a five-point stencil
with the local tangential parameterization for
$a=1$, $a=10$, and $a=50$.
The results are summarized in Table~\ref{tab:ellipse}
with the error computed as above.
This shows
that the error decreases when curvature is more uniform throughout
the curve so that the unstructured stencil approaches
a uniformly-spaced stencil.  
Figure~\ref{Fig:Ellipse} shows the numerical solutions 
of \eqref{eq:Ellipse_comparison} for these three instances
using $N = 160$~points.  

\begin{table}[h!]
\begin{center}
\begin{tabular}{P{15mm} | P{20mm} | P{20mm} | P{20mm} | P{20mm} | P{20mm} }
    \hhline{~~|====}
     \multicolumn{2}{P{35mm}|}{} & \multicolumn{2}{P{40mm}|}{3-Point Stencil} & \multicolumn{2}{P{40mm}}{5-Point Stencil} \\
    \hhline{~|=====}
    & $N$ & $L_\infty$ Error & Order & $L_\infty$ Error & Order  \\
    \hline \hline
    \multirow{4}{*}{$a = 1$}&160&9.639$\cdot 10^{-5}$&---&2.979$\cdot 10^{-7}$&---\\
    &320&2.410$\cdot 10^{-5}$&2.000&1.859$\cdot 10^{-8}$&4.002\\
    &640&6.024$\cdot 10^{-6}$&2.000&1.162$\cdot 10^{-9}$&4.000\\
    &1280&1.506$\cdot 10^{-6}$&2.000&7.322$\cdot 10^{-11}$&3.988\\
    \hline
    \multirow{4}{*}{$a = 10$}&160&5.744$\cdot 10^{-4}$&---&5.191$\cdot 10^{-5}$&---\\
    &320&1.442$\cdot 10^{-4}$&1.995&3.265$\cdot 10^{-6}$&3.991\\
    &640&3.607$\cdot 10^{-5}$&1.999&2.046$\cdot 10^{-7}$&3.997\\
    &1280&9.020$\cdot 10^{-6}$&2.000&1.281$\cdot 10^{-8}$&3.998\\
    \hline
    \multirow{4}{*}{$a = 50$}&160&2.459$\cdot 10^{-3}$&---&7.394$\cdot 10^{-3}$&---\\
    &320&6.401$\cdot 10^{-4}$&1.942&3.119$\cdot 10^{-4}$&4.567\\
    &640&1.630$\cdot 10^{-4}$&1.975&1.968$\cdot 10^{-5}$&3.986\\
    &1280&4.094$\cdot 10^{-5}$&1.992&1.245$\cdot 10^{-6}$&3.983\\
\end{tabular}
\end{center}
\caption{Comparison of using the local 
tangential parameterization method
for different stencil sizes and varying values of $a$
when solving \eqref{eq:Ellipse_comparison}.}
\label{tab:ellipse}
\end{table}

\begin{figure}[h!]
	\centering
	\includegraphics[scale = 0.25]{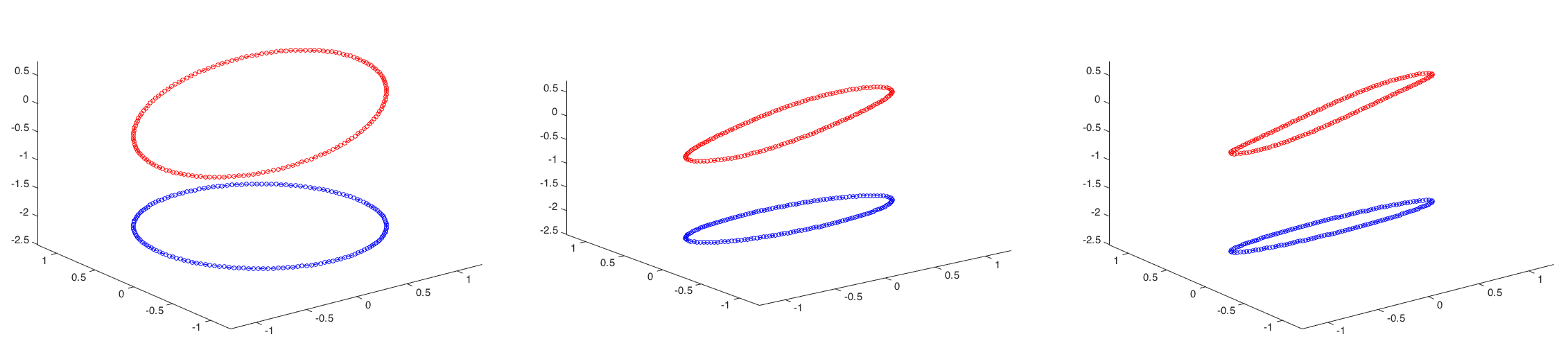} \\
	(a) \hspace{1.77in} (b)\hspace{1.77in}  (c)
	\caption{Solution (red) for $-\Delta u + u = x$ on $x^2 + ay^2 = 1$ (blue) with $N = 160$ mesh points for (a) $a = 1$, (b) $a = 10$, and (c) $a = 50$.}\label{Fig:Ellipse}
\end{figure}
\end{example}

\begin{remark}
Using Theorem~\ref{Lem: 1d_g=1}, this local tangential approximation 
does not encounter the cost of approximating metric tensor coefficients.
Moreover, by using numerical algebraic geometry to perform
computations on the curve $\Omega\subset\bR^n$, we note that we are solving
in the space of $H^1(\Omega)$ instead of the higher-dimensional space $H^1(\bR^n)$. This becomes especially useful for large $n$.
\end{remark}

\subsection{Local parameterization near singularities}

For a smooth curve, every point has a well-defined tangent
direction and the curve has a local tangential parameterization
as illustrated in Figure~\ref{fig:1d-tangentialparam}.
For a singular point, one needs to look at each local irreducible
component and allow the value of $u$ at the singular point to
take a different value along each such component 
as described in Section~\ref{sec:DecompNAG}.  
If a local irreducible component has local degree 1,
it is locally diffeomorphic to a well-defined tangent line so that
the singular point is a smooth point with respect to the local
irreducible component.
Hence, one can
simply apply the local tangential parameterization
from Section~\ref{sec:localTangent} along the local
irreducible component.  

\begin{example}\label{ex:LemniscateSolveNum}
Consider the following problem
\begin{equation}\label{Eq:Lemniscate}
	-\Delta u + u = f(x,y) \qquad \hbox{~on~} x^4 - x^2 + y^2 = 0
\end{equation}  
\noindent
where $f(x,y) = x^2+xy-1$ whose solution was shown in
Fig.~\ref{fig:IntroPlot}(b).  The origin is the only singular
point on the lemniscate of Gerono which arises as the intersection
of two locally irreducible components of local degree 1
so that one can employ a local tangential parameterization
along each locally irreducible component.
Table~\ref{tab:Lemniscate} summarizes the results when
using a local tangential parameterization 
with a three-point stencil 
where the errors are computed using a three-point
stencil with the global parameterization from Ex.~\ref{ex:IllustrativeSimpleCurves} with
$N = \mbox{20,480}$ points.
Figure~\ref{Fig:Figure8} shows two views of the solution
computed using $N = 160$ points.

\begin{table}[h!]
\begin{center}
\begin{tabular}{P{20mm} | P{20mm} | P{20mm}}
    \hline \hline
    $N$ & $L_\infty$ Error & Order  \\
    \hline \hline
    160&3.815$\cdot 10^{-4}$&---\\
    320&9.391$\cdot 10^{-5}$&2.022\\
    640&2.330$\cdot 10^{-5}$&2.011\\
    1280&6.116$\cdot 10^{-6}$&1.930\\
    \hline
\end{tabular}
\end{center}
\caption{Error analysis when using the local parameterization 
with a three-point stencil when solving \eqref{eq:LemniscatePDE}.}
\label{tab:Lemniscate}
\end{table}

\begin{figure}[h!]
	\centering
	\includegraphics[scale = 0.25]{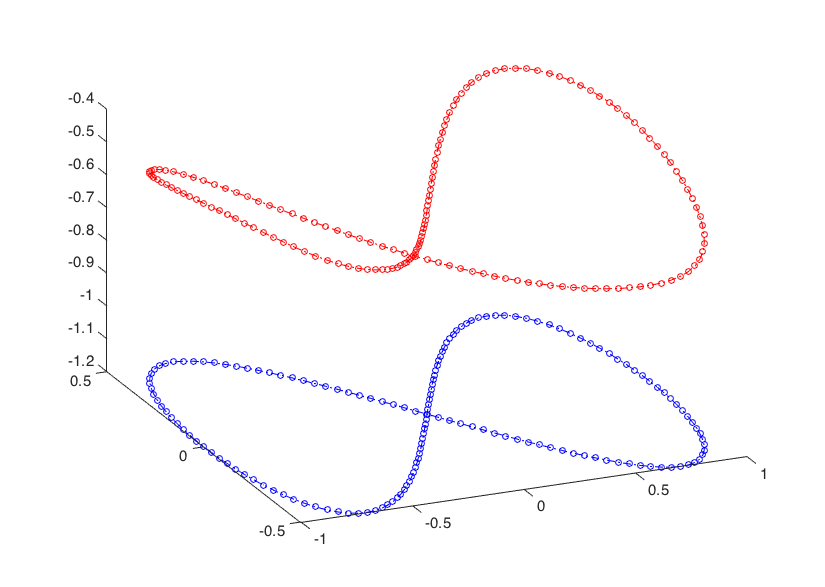}
	\includegraphics[scale = 0.25]{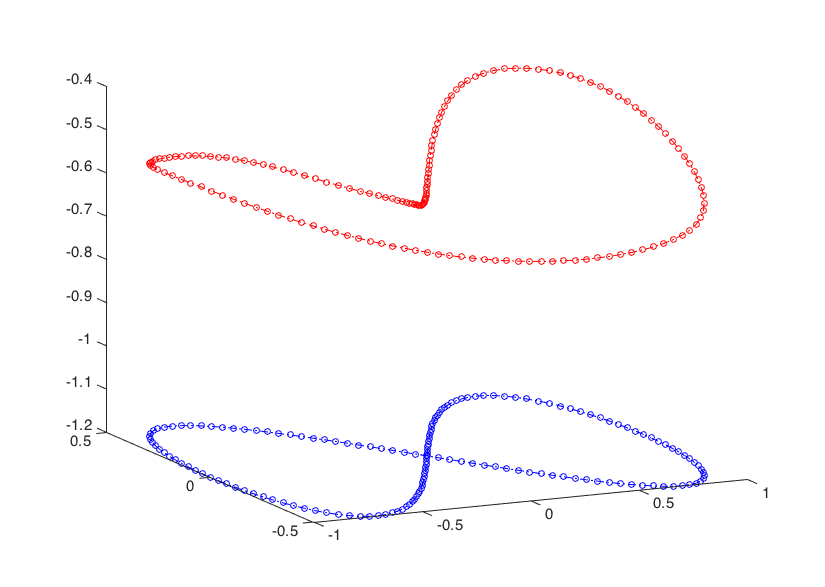} \\
	(a) \hspace{2.77in} (b)
	\caption{Solution (red) for $-\Delta u + u = x^2+xy-1$ on $x^4 - x^2 + y^2 = 0$ (blue) with $N = 160$ points using a three-point stencil, where (a) and (b) are different views of the same solution.}\label{Fig:Figure8}
\end{figure}
\end{example}

When the local degree of a local irreducible component 
is more than $1$, one can, for example,
use a truncated Puiseux series expansion
where the coefficients can be computed using numerical algebraic geometry.
Moreover, by reparameterizing (e.g., see \cite[\S~10.2.2]{SW05}),
the Puiseux series expansion is transformed into a power series
expansion and thus one can use a truncated power series expansion.  
Such a truncated expansion yields an approximation
of a local parameterization of the local irreducible component
near the singularity.  Then, one can use a discretization
of~\eqref{Eq: ODE_1d} with this approximate local parameterization 
near the singularity and use a local tangential parameterization away from the singularity.  

\begin{example}\label{ex:Cardiod}
Consider the following problem
\begin{equation}\label{Eq:Cardioid}
	-\Delta u + u = f(x,y) \qquad \hbox{~on~} (x^2+y^2)^2 + 4x(x^2+y^2) - 4y^2 = 0,
\end{equation}  
\noindent
where $f(x,y) = (3607x^2 - 224xy^2 + 7662x - 53y^2 - 973)/(196x^2 + 616x + 196y^2 + 1112)$.  The curve is called a cardioid (shown in blue
in Figs.~\ref{Fig:CardioidApprox} and~\ref{Fig:Cardioid}) which has a 
locally irreducible cusp at the origin of local degree 2.  
The choice of $f$ was selected so that \eqref{Eq:Cardioid}
has an exact solution of $u(x,y) = x + x^2$ which is
used for error analysis
provided in Table~\ref{tab:Cardioid}.
In particular, to demonstrate higher-order
methods, we used an eighth-order method with a local tangential 
approximation away from the singularity.  Near the singularity,
we approximated $x(y)$ so that $F(x(y),y) = 0$.  Since
$x(y)$ is a Puiseux series where the denominator is $3$, reparameterizing $y=s^3$ yields that $x(s)$ is power series in $s$ with the first few
terms being 
$$x(s) = s^2 - \frac{5}{12}s^4 - \frac{1}{16}s^6 - \frac{91}{5184}s^8 + \cdots.$$
To ensure more than enough accuracy, 
we used a degree 58 expansion which is pictorially shown
in Fig.~\ref{Fig:CardioidApprox}
coupled with a tenth-order discretization at the singularity.
Figure~\ref{Fig:Cardioid} shows the numerical solution of \eqref{Eq:Cardioid} computed using $N = 60$ points.
Since this computation was performed using double precision, 
the value of $N$ needs to be large enough to show convergence of the method but small enough to avoid numerical ill-conditioning.  
\vspace{-5mm}
\begin{figure}[h!]
	\centering
	\includegraphics[scale = 0.2]{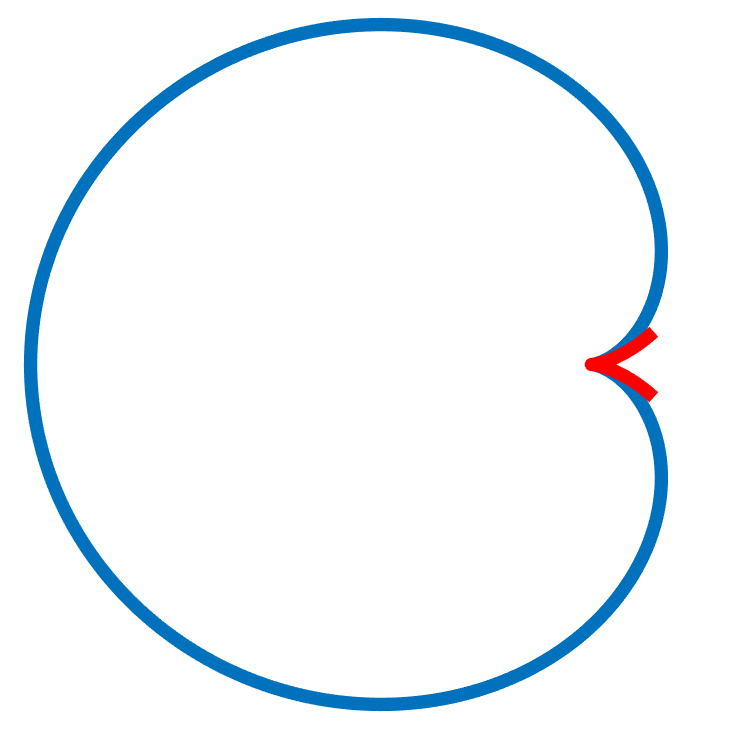}
	\caption{Cardioid (blue) along with approximation (red)
     near the singularity at the origin.}\label{Fig:CardioidApprox}
\end{figure}
\vspace{-8mm}
\begin{table}[h!]
\begin{center}
\begin{tabular}{P{20mm} | P{20mm} | P{20mm}}
    \hline \hline
    $N$ & $L_\infty$ Error & Order  \\
    \hline \hline
    60&2.826$\cdot 10^{-5}$&---\\
    80&2.348$\cdot 10^{-6}$&8.648\\
    100&3.656$\cdot 10^{-7}$&8.334\\
    120&8.196$\cdot 10^{-8}$&8.202\\
    140&2.339$\cdot 10^{-8}$&8.134\\
    \hline
\end{tabular}
\end{center}
\caption{Error analysis for solving \eqref{Eq:Cardioid}.}
\label{tab:Cardioid}
\end{table}
\begin{figure}[h!]
	\centering
	\includegraphics[scale = 0.5]{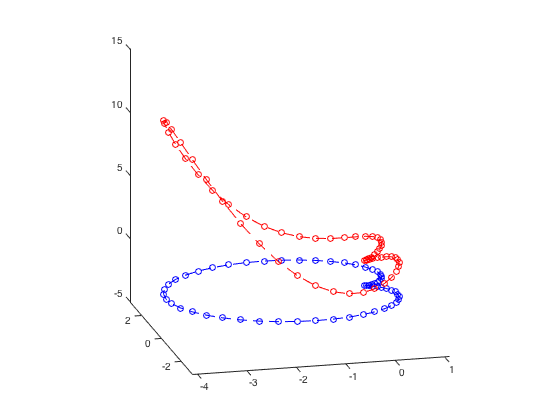}
	\caption{Solution (red) for \eqref{Eq:Cardioid} of a cardioid (blue) using $N = 60$ points.}\label{Fig:Cardioid}
\end{figure}
\end{example}

\section{Local parameterization for surfaces}\label{Sec:2d}

For smooth surfaces with a known global parameterization, 
there exists well-studied methods to solve~\eqref{Eq: ellip_pde}
as highlighted in the Introduction.
As in Section~\ref{Sec:1d} when considering curves,
we focus on the case when there is no readily available
global parameterization.

\subsection{Surface decomposition using numerical algebraic geometry}

The extension of Section~\ref{sec:DecompNAG}
to a surface is a cellular decomposition consisting
of finitely many faces $F$, which are portions of
the surface diffeomorphic to a rectangle in $\bR^2$,
along with edges $E$ and vertices $V$.  
In particular, the boundary of each face consists of finitely many 
edges, each of which has a vertex at each end.

\begin{example}\label{ex:SurfaceDecomp}
A cellular decomposition of a sphere consisting
of 2 vertices, 2 edges, and 2 faces is illustrated in Figure~\ref{fig:SphereSurfaceDecomp}. 

\begin{figure}[!ht]
    \centering
    \includegraphics[scale=0.6]{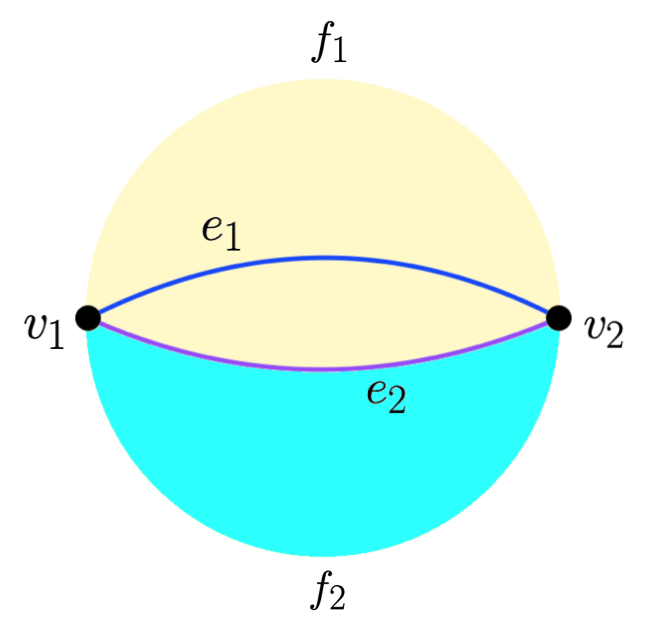}
    \caption{Surface decomposition for a sphere
    with vertices $v_i$, edges $e_j$, and faces $f_k$.}
    \label{fig:SphereSurfaceDecomp}
\end{figure}
\end{example}

A numerical cellular decomposition, as first described in~\cite{AlmostSmoothDecomp}, represents each
face by an interior point along with a homotopy
that permits the tracking along the face starting from the
interior point.  The same holds for edges as summarized in 
Section~\ref{sec:DecompNAG}.  

\subsection{Tangential parameterization at smooth points}\label{sec:LocalTangent2}

By simply adapting the approach in Section~\ref{sec:localTangent}
from a local parameterization based on the tangent line for a curve
to a local parameterization based on the tangent plane for a surface,
the following obtains an analog of Theorem~\ref{Lem: 1d_g=1}
for the surface case.

Suppose that $p$ is a smooth point on the surface $\Omega\subset\bR^n$ 
such that 
$w_1,w_2\in\bR^n$ span the tangent space with $w_i\cdot w_j = \delta_{ij}$.
Hence, the tangent space is parameterized by
$\ell(t) = p+t_1w_1+t_2w_2$.  
Let $\alpha:\Omega\rightarrow\bR^2$ where 
$\alpha(q) = ((q-p)\cdot w_1,(q-p)\cdot w_2)$.
On $\Omega$ locally nearly $p$, $\alpha$ has an inverse, say, $X(t) = X(t_1,t_2)$ where $X(0) = p$.

\begin{theorem}\label{Lem: 2d_g=1}
With the setup described above, $\frac{\partial X(0)}{\partial t_i} = w_i$.
Moreover, for corresponding metric tensor $G(t)$, 
\eqref{Eq: LB_local_coord} becomes
$$\Delta u(0) = \frac{\partial^2 u(0)}{\partial t_1^2} + \frac{\partial^2 u(0)}{\partial t_2^2}.$$
\end{theorem}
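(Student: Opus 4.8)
The plan is to mirror the proof of Theorem~\ref{Lem: 1d_g=1}, replacing the single tangential constraint by two orthonormal ones and invoking the implicit function theorem in the same way. First I would record that, by construction, the inverse map $X(t)=X(t_1,t_2)$ satisfies the system
\begin{equation*}
H(X(t),t) := \left[\begin{array}{c} (X(t)-p)\cdot w_1 - t_1 \\ (X(t)-p)\cdot w_2 - t_2 \\ F(X(t)) \end{array}\right] = 0,
\end{equation*}
where, exactly as in the curve case, we take a subsystem of $F$ whose Jacobian has full rank $n-2$ at the smooth point $p$ so that
\begin{equation*}
M(t) := \left[\begin{array}{c} w_1^T \\ w_2^T \\ JF(X(t)) \end{array}\right]
\end{equation*}
is a square $n\times n$ matrix. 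Since $w_1,w_2$ span the tangent space and the rows of $JF(p)$ span its orthogonal complement (the normal space), $M(0)$ has rank $n$, and by continuity $M(t)$ remains invertible for $t$ near $0$.

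Differentiating $H(X(t),t)=0$ and applying the implicit function theorem just as in \eqref{Eq: Jacob} yields
\begin{equation*}
JX(t) = M(t)^{-1}\left[\begin{array}{c} I_2 \\ 0 \end{array}\right],
\end{equation*}
whose two columns are $\partial X/\partial t_1$ and $\partial X/\partial t_2$. To establish the first claim I would evaluate at $t=0$: because $JF(p)\,w_i = 0$ and $w_i\cdot w_j=\delta_{ij}$, a direct check shows that $M(0)\,w_i$ equals the $i$-th standard basis vector $e_i\in\bR^n$, so $\partial X(0)/\partial t_i = M(0)^{-1}e_i = w_i$. It then follows immediately that the metric tensor $G(t)_{ij}=\frac{\partial X}{\partial t_i}\cdot\frac{\partial X}{\partial t_j}$ satisfies $G(0)=I_2$, hence $g(0)=1$ and $G^{-1}(0)=I_2$.

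The key step, which is the two-variable analog of the relation $v_i\cdot X_i''=0$ in the curve proof, is to differentiate the first two rows of $H$. These give $w_i\cdot(X(t)-p)=t_i$ identically, so $w_i\cdot \frac{\partial X}{\partial t_j} = \delta_{ij}$ and, differentiating once more, $w_i\cdot \frac{\partial^2 X}{\partial t_j\partial t_k} = 0$ for all $i,j,k$; that is, every second partial of $X$ is orthogonal to the tangent plane at $t=0$. Since $\partial_k G_{ij} = \frac{\partial^2 X}{\partial t_k\partial t_i}\cdot\frac{\partial X}{\partial t_j} + \frac{\partial X}{\partial t_i}\cdot\frac{\partial^2 X}{\partial t_k\partial t_j}$ and $\partial X(0)/\partial t_j=w_j$, each first derivative $\partial_k G_{ij}$ vanishes at $t=0$; consequently all first derivatives of $g$ and of the entries $g^{ij}$ vanish at the origin.

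Finally I would substitute these facts into \eqref{Eq: LB_local_coord}. Expanding the outer derivative produces one term carrying $\partial\sqrt{|g|}/\partial t_i$, one carrying $\partial g^{ij}/\partial t_i$, and the pure second-order term $\sqrt{|g|}\,g^{ij}\,\partial^2 u/\partial t_i\partial t_j$. At $t=0$ the first two families of terms vanish by the derivative computation above, while $\sqrt{|g(0)|}=1$ and $g^{ij}(0)=\delta_{ij}$, leaving $\Delta u(0)=\sum_{i=1}^2 \partial^2 u(0)/\partial t_i^2$, as claimed. I expect the main obstacle to be this orthogonality and metric-derivative step: although it is a direct generalization of the curve computation, one must track the three-index second derivatives $\partial^2 X/\partial t_j\partial t_k$ carefully and verify that \emph{all} mixed metric derivatives vanish, so that the off-diagonal term $g^{12}$ and its derivatives contribute nothing. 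Confirming the invertibility of $M(t)$ through the correct full-rank subsystem of $F$ at the smooth point is the only other point requiring care.
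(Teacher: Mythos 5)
Your proposal is correct and follows essentially the same route as the paper's proof: the same constraint system for $X(t)$, the same implicit function theorem computation giving $\frac{\partial X(0)}{\partial t_i} = w_i$, the same orthogonality relation $w_i\cdot \frac{\partial^2 X}{\partial t_j \partial t_k} = 0$, and the same conclusion that all first derivatives of $g$ and $g^{ij}$ vanish at the origin so that \eqref{Eq: LB_local_coord} collapses to the flat Laplacian at $t=0$. The only differences are that you are slightly more explicit than the paper on two points it leaves implicit: the invertibility of the bordered Jacobian $M(t)$ via a full-rank subsystem of $F$, and the entrywise computation showing $\partial_k G_{ij}(0)=0$.
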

\begin{proof}
The corresponding system that $X(t)$ satisfies is
$$\left[\begin{array}{c} 
(X(t)-p)\cdot w_1 - t_1 \\
(X(t)-p)\cdot w_2 - t_2 \\
F(X(t)) 
\end{array}\right] = 0.
$$
By the implicit function theorem,
$$\left[\begin{array}{cc}
\frac{\partial X(t)}{\partial t_1} & \frac{\partial X(t)}{\partial t_2}
\end{array}\right] = - \left[\begin{array}{c}
w_1^T \\ w_2^T \\ JF(X(t))
\end{array}\right]^{-1} \left[\begin{array}{cc} -1 & 0 \\ 0 & -1 \\ 0 & 0
\end{array}\right].$$
Since $JF(X(0))w_i = 0$ and $w_i\cdot w_j = \delta_{ij}$,
one has
$\frac{\partial X(0)}{\partial t_i} = w_i$
and $w_i\cdot \frac{\partial X(t)}{\partial t_j} = \delta_{ij}$.
Hence, 
\begin{equation}\label{eq:2ndDeriv}
w_i\cdot \frac{\partial^2 X(t)}{\partial t_j \partial t_k} = 0.    
\end{equation}

By definition, the metric tensor and its inverse are
$$G(t) = \left[\begin{array}{cc}
\frac{\partial X(t)}{\partial t_1}\cdot \frac{\partial X(t)}{\partial t_1} 
& 
\frac{\partial X(t)}{\partial t_1}\cdot \frac{\partial X(t)}{\partial t_2} \\[0.1in]
\frac{\partial X(t)}{\partial t_2}\cdot \frac{\partial X(t)}{\partial t_1} 
& 
\frac{\partial X(t)}{\partial t_2}\cdot \frac{\partial X(t)}{\partial t_2}
\end{array}\right]
\hbox{~and~}
G^{-1}(t) = \frac{1}{g(t)}\left[\begin{array}{cc}
\frac{\partial X(t)}{\partial t_2}\cdot \frac{\partial X(t)}{\partial t_2} 
& 
-\frac{\partial X(t)}{\partial t_2}\cdot \frac{\partial X(t)}{\partial t_1} \\[0.1in]
-\frac{\partial X(t)}{\partial t_1}\cdot \frac{\partial X(t)}{\partial t_2} 
& 
\frac{\partial X(t)}{\partial t_1}\cdot \frac{\partial X(t)}{\partial t_1}
\end{array}\right]
$$
where $g(t) = \det G(t)$.  Hence, $G(0) = G^{-1}(0) = I_2$ and $g(0) = 1$.  
Moreover, if follows from \eqref{eq:2ndDeriv}
that $\frac{\partial g(0)}{\partial t_i} = 0$
and $\frac{\partial g^{ij}(0)}{\partial t_i} = 0$
where $g^{ij}(t)$ is the $(i,j)$-entry of $G^{-1}(t)$
so the result follows.
\end{proof}

\begin{remark}
With appropriate changes to the setup
and following a similar proof, Thm.~\ref{Lem: 2d_g=1} 
extends to smooth points on $d$-folds
in $\bR^n$.  We do not consider $d>2$ here since
it remains an open problem to compute
a numerical cell decomposition using numerical algebraic geometry
for $d>2$.
\end{remark}

\begin{example}\label{ex:EllipsoidLocal}
To illustrate, consider the ellipsoid
$x^2+10(y^2+z^2)=1$ at
$$p=\left[\begin{array}{c} 1 \\ 0 \\ 0 \end{array}\right]
\hbox{~~with~~}
w_1=\left[\begin{array}{c} 0 \\ 1 \\ 0 \end{array}\right]
\hbox{~~and~~}
w_2=\left[\begin{array}{c} 0 \\ 0 \\ 1 \end{array}\right]
\hbox{~~so that~~}
X(t_1,t_2) = \left[\begin{array}{c}
\sqrt{1-10(t_1^2+t_2^2)} \\ t_1 \\ t_2
\end{array}\right].
$$
Clearly, $X(0,0) = p$.  Since $\frac{\partial X_1(t_1,t_2)}{\partial t_i} = \frac{-10 t_i}{X_1(t_1,t_2)}$, it is clear
that $\frac{\partial X(0,0)}{\partial t_i} = w_i$.
Moreover, for $(t_1,t_2)$ near the origin, 
\eqref{Eq: LB_local_coord} becomes
$$\begin{array}{rcl}
\Delta u(t_1,t_2) &=& 
\sqrt{\frac{1-10(t_1^2+t_2^2)}{1+90(t_1^2+t_2^2)}}\cdot \left[\frac{\partial}{\partial t_1}
\left(
\sqrt{\frac{1+90(t_1^2+t_2^2)}{1-10(t_1^2+t_2^2)}}
\left(
\frac{1 - 10t_1^2 + 90t_2^2}{1 + 90(t_1^2 + t_2^2)}
\frac{\partial u}{\partial t_1}
- \frac{100t_1t_2}{1 + 90(t_1^2 + t_2^2)}
\frac{\partial u}{\partial t_2}
\right)
\right) + \right.\\
& & \hspace{1in}\left.
\frac{\partial}{\partial t_2}
\left(
\sqrt{\frac{1+90(t_1^2+t_2^2)}{1-10(t_1^2+t_2^2)}}
\left(
\frac{1 + 90t_1^2 - 10t_2^2}{1 + 90(t_1^2 + t_2^2)}
\frac{\partial u}{\partial t_2}
- \frac{100t_1t_2}{1 + 90(t_1^2 + t_2^2)}
\frac{\partial u}{\partial t_1}
\right)
\right)
\right]
\end{array}
$$
which yields $\Delta u(0,0) = \frac{\partial^2 u(0,0)}{\partial t_1^2} + \frac{\partial^2 u(0,0)}{\partial t_2^2}$
in accordance with Thm.~\ref{Lem: 2d_g=1}.
\end{example}

From an unstructured mesh of points on the surface,
one can easily construct a local discretization
of $\Delta u$ at each grid point with respect
to the local tangential parameterization
yielding a linear system to solve as in the 
curve case.

\begin{example}\label{ex:Ellipsoid}
Consider the following problem
\begin{equation}\label{Eq:ellipsoid}
	-\Delta u + u = f(x,y,z;a) \qquad \hbox{~on~} x^2+a(y^2+z^2)=1
\end{equation}  
\noindent
where $a \in \mathbb{R}_{>0}$ and $f(x,y,z;a) = \frac{1}{3(a + x^2(1-a))^2}\left[ax(2a+1) + x^3(1-a)(3a+x^2(1-a))\right]$. 
The surface is an ellipsoid (shown in Fig.~\ref{Fig:Ellipsoid})
and the choice of $f$ 
was selected so that \eqref{Eq:ellipsoid} has an exact solution
of $u(x,y,z) = x/3$ which is used for error analysis.
In particular, using a roughly uniform grid of 
size $N^2$ on the ellipsoid
with the local tangential parameterization, 
the results are summarized in Table~\ref{tab:ellipsoid}
using a nine-point stencil
for various choice of $a$. 
Figure~\ref{Fig:Ellipsoid} shows the solution of \eqref{Eq:ellipsoid} computed when $N = 40$.

\begin{table}[h!]
\begin{center}
\begin{tabular}{P{15mm} | P{20mm} | P{20mm} | P{20mm}}
    \hhline{~|===}
    & $N$ & $L_\infty$ Error & Order  \\
    \hline \hline
    \multirow{4}{*}{a = 1}&20&3.462$\cdot 10^{-2}$&---\\
    &40&8.655$\cdot 10^{-4}$&2.000\\
    &80&2.164$\cdot 10^{-4}$&2.000\\
    &160&5.410$\cdot 10^{-5}$&2.000\\
    \hline
    \multirow{4}{*}{a = 10}&20&2.337$\cdot 10^{-2}$&---\\
    &40&5.756$\cdot 10^{-3}$&2.022\\
    &80&1.435$\cdot 10^{-3}$&2.004\\
    &160&4.056$\cdot 10^{-4}$&1.823\\
    \hline
    \multirow{4}{*}{a = 50}&20&3.219$\cdot 10^{-2}$&---\\
    &40&1.326$\cdot 10^{-2}$&1.279\\
    &80&3.606$\cdot 10^{-3}$&1.879\\
    &160&9.129$\cdot 10^{-4}$&1.982\\
\end{tabular}
\end{center}
\caption{Comparison of using
the local tangential parameterization 
on a nine-point stencil with varying values of $a$
when solving \eqref{eq:Ellipse_comparison}.}
\label{tab:ellipsoid}
\end{table}

\begin{figure}[h!]
	\centering
	\includegraphics[scale = 0.25]{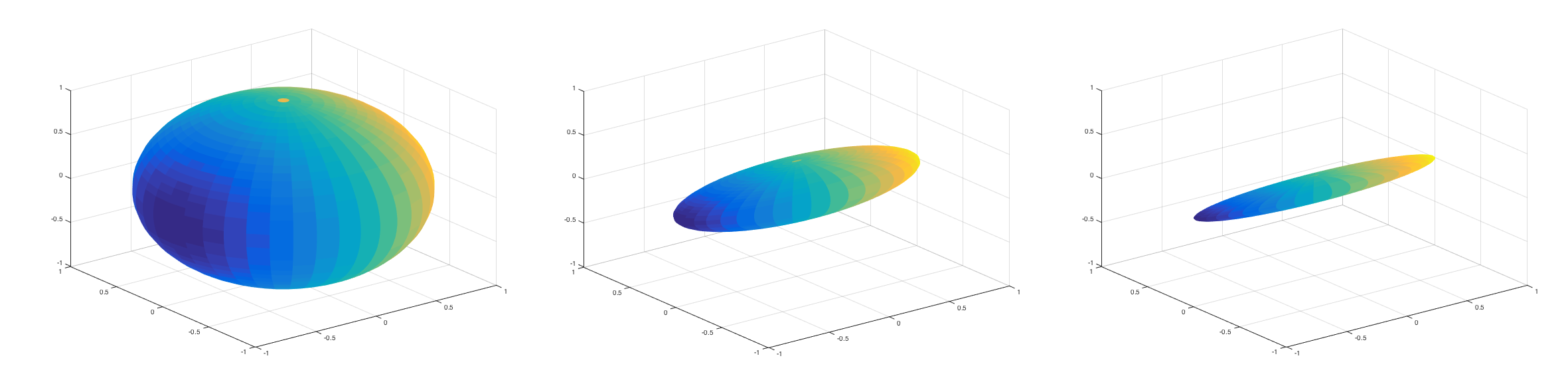} \\
	(a) \hspace{1.77in} (b)\hspace{1.77in}  (c)
	\caption{Solution of \eqref{Eq:ellipsoid} with $N = 40$ for (a) a = 1, (b) a = 10, and (c) a = 50.}\label{Fig:Ellipsoid}
\end{figure}
\end{example}

\subsection{Local parameterization near singularities}

For almost smooth surfaces, there are only
finitely many singular points and thus each are isolated.
As with the curve case, one first computes 
a local irreducible component at each singular point
since the value of $u$ at a singular point could
be different along different local irreducible components.
If a local irreducible component has local degree $1$,
it is locally diffeomorphic to a well-defined tangent 
plane for which a local tangential parameterization
from Section~\ref{sec:LocalTangent2} can be used.
For local irreducible components of higher local
degree, one can use a local parameterization 
(or an approximation of one)
to discretize near the singularity for each
each local irreducible component.

\begin{example}
Consider the following problem
\begin{equation}\label{Eq:HornTorus}
	-\Delta u + u = x \qquad \hbox{~on~} (x^2 + y^2 + z^2)^2 - 4(x^2+y^2) = 0
\end{equation}   
where the surface is a called a horn torus (shown in Fig.~\ref{Fig:HornTorus}).
The horn torus is almost smooth with a singularity
at the origin.
Using an approximately uniform grid of $N^2$ points,
the local tangential parameterization was used
away from the origin.
The surface is locally irreducible at the origin
and the following local parameterization was utilized: 
 $$x(t_1,t_2) = t_1^2\cos(t_2),~~~~y(t_1,t_2) = t_1^2\sin(t_2),
 ~~~~z(t_1,t_2) = t_1\sqrt{2-t_1^2}.
 $$
A nine-point stencil was used at all points
with the results summarized in Table~\ref{tab:HornTorus}
where the error is computed by comparing with the 
solution computed when $N=160$.
Figure~\ref{Fig:HornTorus} shows the solution of \eqref{Eq:HornTorus} 
when~$N = 40$.


\begin{table}[h!]
\begin{center}
\begin{tabular}{P{20mm} | P{40mm}}
    \hline \hline
    $N$ & $L_\infty$ Error  \\
    \hline \hline
    20&2.235$\cdot 10^{-2}$\\
    40&1.464$\cdot 10^{-3}$\\
    80&3.075$\cdot 10^{-4}$\\
    160& -- \\
    \hline
\end{tabular}
\end{center}
\caption{Error analysis for solving \eqref{Eq:HornTorus}.}
\label{tab:HornTorus}
\end{table}

\begin{figure}[h!]
	\centering
	\includegraphics[scale = 0.3]{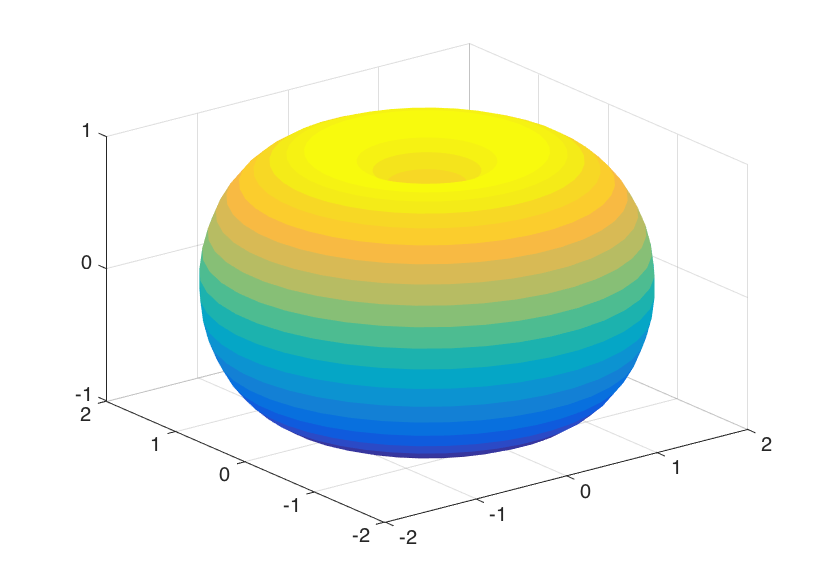}
	\caption{Solution for $-\Delta u + u = x$ on $(x^2 + y^2 + z^2)^2 - 4(x^2+y^2) = 0$ when $N = 40$.}\label{Fig:HornTorus}
\end{figure}

\end{example}



\newcommand{\noopsort}[1]{}\def\cprime{$'$}


\begin{thebibliography}{10}

\bibitem{BertiniCheby} S. Amethyst, J.D. Hauenstein,
and C.W. Wampler,
\newblock Cellular decompositions and
Chebyshev interpolants for real algebraic curves.
\newblock Preprint available at \url{www.nd.edu/~jhauenst/preprints/ahwCellDecomp.pdf}.

\bibitem{BHSW:Bertini} D.J. Bates, J.D. Hauenstein, A.J. Sommese, and C.W. Wampler,
\newblock Bertini: Software for numerical algebraic geometry.
\newblock Available at \url{bertini.nd.edu}.

\bibitem{BHSW:BertiniBook} D.J. Bates, J.D. Hauenstein, A.J. Sommese, and C.W. Wampler,
Numerically Solving Polynomial Systems with Bertini. 
\emph{SIAM}, 2013.

\bibitem{LAGR} L. Beirao da Veiga, A. Buffa, G. Sangalli, and R. Vázquez, \newblock Mathematical analysis of variational isogeometric methods {\em Acta Numerica} 23 (2014) pp.157--287.

\bibitem{BCOS} M. Bertalmio, L.-T. Cheng, S. Osher, and G. Sapiro, \newblock Variational problems and partial
differential equations on implicit surfaces, J. {\em Comput. Phys.}, 174 (2002), pp. 759--780.

\bibitem{BSCO} M. Bertalmio, G. Sapiro, L.-T. Cheng, and S. Osher, \newblock A Framework for Solving Surface
Partial Differential Equations for Computer Graphics Applications, {\em UCLA CAM Report}
(00--43), (2000).

\bibitem{AlmostSmoothDecomp} G.M. Besana, S. Di Rocco, J.D. Hauenstein,
A.J. Sommese, and C.W. Wampler,
Cell decomposition of almost smooth real algebraic surfaces.
{\em Num. Algorithms}, 63(4) (2013), pp. 645--678.

\bibitem{BertiniReal} D.A. Brake, D.J. Bates, W. Hao, J.D. Hauenstein, A.J. Sommese, and C.W. Wampler,  Algorithm 976: Bertini\_real: Numerical decomposition of real algebraic curves and surfaces. {\em ACM Trans. Math. Softw.}, 44(1)  (2017), 10. Available at \url{bertinireal.com}.

\bibitem{LocalNID}
D.A. Brake, J.D. Hauenstein,
and A.J. Sommese,
Numerical local irreducible decomposition.
{\em LNCS}, 9582 (2016) pp.124--129.


\bibitem{BHLM} E. Burman, P. Hansbo, M. G. Larson, and A. Massing, \newblock Cut finite element methods for partial differential equations on embedded manifolds of arbitrary codimensions. 
{\em ESAIM: M2AN}, 52 (2018), 
pp. 2247--2282.

\bibitem{DDEH} K. Deckelnick, G. Dziuk, C.M. Elliott, C.-J. Heine, \newblock An h-narrow band finite-element method for elliptic equations on implicit surfaces, {\em IMA
J. Numer. Anal}. 30 (2) (2010), pp. 351--376.

\bibitem{Dzuik1988}G. Dziuk, \newblock Finite Elements for the Beltrami operator on arbitrary surfaces. In: Hildebrandt S., Leis R. (eds) Partial Differential Equations and Calculus of Variations. Lecture Notes in Mathematics, vol 1357. {\em Springer}, Berlin, Heidelberg, 1988.

\bibitem{DE} G. Dziuk and C. M. Elliott, \newblock Finite element methods for surface PDEs, {\em Acta Numer.}, 22
(2013) pp. 289--396.

\bibitem{HX} Z. Hong and D. Xu, \newblock New strategies for some issues of numerical manifold method in simulation of crack propagation, {\em International Journal for Numerical Methods in Engineering} 97, no. 13 (2014) pp. 986--1010.

\bibitem{KCGKLG} J. Kanevsky, J. Corban, R. Gaster, A. Kanevsky, S. Lin, and M. Gilardino, \newblock Big data and machine learning in plastic surgery: A new frontier in surgical innovation. {\em Plast. Reconstr. Surg.}, 137(5) (2016), pp. 890e--897e.

\bibitem{LC} R. Lai and T. F. Chan, \newblock A framework for intrinsic image processing on surfaces, {\em Comput.
Vis. Image Underst.}, 115 (2011), pp. 1647--1661.
\bibitem{Varah1975} J. M. Varah, A lower bound for the smallest singular value of a matrix,{\em Linear Algebra and its applications}, 11, no. 1 (1975), pp. 3--5.

\bibitem{AN1963} J.H. Ahlberg and E. N. Nilson. \newblock Convergence properties of the spline fit, {\em Journal of the Society for Industrial and Applied Mathematics} 11, no. 1 (1963): 95--104.

\bibitem{RealCurveDecomp} Y. Lu, D.J. Bates, A.J. Sommese,
and C.W. Wampler, \newblock Finding all real points of a complex curve.  In {\em Algebra, Geometry and Their Interactions}, vol. 448 of {\em Contemp. Math.},
AMS, Providence, RI, 2007, pp. 183--205.

\bibitem{MDS} M. Meyer, M. Desbrun, P. Schroder, and A. H. Barr, \newblock Discrete differential-geometry operators
for triangulated 2-manifolds, {\em Visualization and Mathematics III, H. C. Hege and
K. Polthier, eds., Springer}, New York, 2003, pp. 35--57.

\bibitem{OS} S. Osher and J. Sethian., \newblock Fronts propagation with curvature-dependent speed: Algorithms
based on Hamilton–Jacobi formulations, {\em J. Comput. Phys.}, 79 (1988), pp. 12--49.

\bibitem{RWP} M. Reuter, F. Wolter, and N. Peinecke,\newblock Laplace–Beltrami spectra as Shape-DNA of surfaces
and solids, {\em Comput.-Aided Des.}, 38 (2006), pp. 342--366.

\bibitem{SW05} A.J. Sommese and C.W. Wampler, \newblock
{\em The Numerical Solution of Systems of Polynomials Arising in Engineering and Science},
{World Scientific Publishing Co. Pte. Ltd., Hackensack, NJ}, 2005.

\bibitem{SK} A. Spira and R. Kimmel, \newblock Geometric curve flows on parametric manifolds, {\em J. Comput. Phys.},
223 (2007), pp. 235--249

\bibitem{T1} G. Taubin, \newblock Geometric signal processing on polygonal meshes, {\em Eurographics State of the
Art Reports}, 4 (2000), pp. 81--96.

\bibitem{WLGHCT} Y. Wang, L. M. Lui, X. Gu, K. M. Hayashi, T. F. Chan, A. W. Toga, P. M. Thompson,
and S. Yau, \newblock Brain surface conformal parameterization using Riemann surface structure,
{\em IEEE Trans. Med. Imag.}, 26 (2007), pp. 853--865.

\bibitem{X1} G. Xu, \newblock Convergent discrete Laplace–Beltrami operators over triangular surfaces, {\em in Proceedings
of Geometric Modeling and Processing, IEEE}, 2004, pp. 195--204.

\bibitem{YTHWH}	Z. Ye, A.P. Tafti, K.Y. He, K. Wang, M.M. He, \newblock SparkText: Biomedical Text Mining on Big Data Framework. {\em PLoS ONE}, 11(9) (2016), e0162721.



\end{thebibliography}
\end{document}